\documentclass[a4paper, 11pt]{article}

\usepackage[utf8]{inputenc}
\usepackage[T1]{fontenc}
\usepackage{fullpage}

\usepackage{comment}
\usepackage{amsmath, amsthm, amssymb}
\usepackage{graphicx}
\usepackage{enumerate}
\usepackage{authblk}
\usepackage{thmtools}
\usepackage{thm-restate}
\usepackage[colorlinks=true, citecolor=red]{hyperref}
\usepackage{float}

\DeclareMathOperator{\diam}{\mathrm{diam}}

\usepackage[linesnumbered,ruled,onelanguage,vlined]{algorithm2e}

\usepackage{tikz}
\renewenvironment{abstract}
{\small\vspace{-1em}
\begin{center}
\bfseries\abstractname\vspace{-.5em}\vspace{0pt}
\end{center}
\list{}{
\setlength{\leftmargin}{0.6in}\setlength{\rightmargin}{\leftmargin}}\item\relax}
{\endlist}
\usepackage{comment}
\declaretheorem[name=Theorem]{theorem}
\declaretheorem[name=Lemma, sibling=theorem]{lemma}

\declaretheorem[name=Corollary, sibling=theorem]{corollary}
\declaretheorem[name=Conjecture, sibling=theorem]{conjecture}

\declaretheorem[name=Claim, sibling=theorem]{claim}

\declaretheorem[name=Observation, style=remark, sibling=theorem]{observation}
\declaretheorem[name=Question, style=remark, sibling=theorem]{question}
\def\cqedsymbol{\ifmmode$\lrcorner$\else{\unskip\nobreak\hfil
\penalty50\hskip1em\null\nobreak\hfil$\lrcorner$
arfillskip=0pt\finalhyphendemerits=0\endgraf}\fi}

\interfootnotelinepenalty=10000

\renewcommand{\bar}{\overline}
     \def\C{\mathcal{C}}        
    \def\P{\mathcal{P}} \def\R{\mathcal{R}}

\usetikzlibrary{calc}
\let\le\leqslant
\let\ge\geqslant

\thickmuskip=5mu plus 1mu minus 2mu

\title{Extremal Independent Set Reconfiguration\thanks{This work was supported by ANR project GrR (ANR-18-CE40-0032)}}

\author[1]{Nicolas Bousquet}
\author[1,2]{Bastien Durain}
\author[1]{Théo Pierron}
\author[2]{Stéphan Thomassé}

\affil[1]{Univ. Lyon, Université Lyon 1, LIRIS UMR CNRS 5205, F-69621, Lyon, France}
\affil[2]{ENS Lyon, Département informatique, Lyon, France}
\date{}

\begin{document}

\maketitle
 
 \begin{abstract}
     The independent set reconfiguration problem asks whether one can transform one given independent set of a graph into another, by changing vertices one by one in such a way the intermediate sets remain independent. 
     Extremal problems on independent sets are widely studied: for example, it is well known that an $n$-vertex graph has at most $3^{n/3}$ maximum independent sets (and this is tight). 
     This paper investigates the asymptotic behavior of maximum possible length of a shortest reconfiguration sequence for independent sets of size $k$ among all $n$-vertex graphs. 
     
     We give a tight bound for $k=2$. We also provide a subquadratic upper bound (using the hypergraph removal lemma) as well as an almost tight construction for $k=3$. We generalize our results for larger values of $k$ by proving an $n^{2\lfloor k/3 \rfloor}$ lower bound.
 \end{abstract}

\section{Introduction}

Many questions can be formalized as follows:
given the description of a system state and the description of a state we would ``prefer'' the system to be in, is it possible to transform the system
from its current state into the more desired one without ``breaking'' the system in the process? And if yes, how many steps are needed? Such problems naturally arise in the fields of mathematical puzzles, operational research, computational geometry, bioinformatics,  and quantum computing for instance. These questions received a substantial amount of attention under the so-called \emph{combinatorial reconfiguration framework} in the last few years from both structural and algorithmic point of views.
We refer the reader to the surveys~\cite{H13,DBLP:journals/algorithms/Nishimura18,BousquetMNS22+} for more background on combinatorial reconfiguration.

Given a reconfiguration problem, one can naturally define the \emph{(re)configuration graph} where the vertices correspond to solutions and there is an edge between two vertices if one can transform one into the other in one step. Structural properties of configuration graphs have been studied under various names in different fields, for instance by looking for its connectivity (irreducibility of Markov chains) or hamiltonian paths (Gray codes for hypercubes).

\paragraph*{Independent set reconfiguration.} 
Given a simple undirected graph $G$, a set of vertices $S \subseteq V(G)$ is an \emph{independent set} if the vertices
of $S$ are pairwise non-adjacent. Finding an independent set of maximum cardinality, i.e., the {\sc Independent Set} problem,
is a fundamental problem in algorithmic graph theory and is known to be not only NP-hard, but also W[1]-hard and
not approximable within $O(n^{1-\varepsilon})$, for any $\varepsilon > 0$, unless P $=$ NP~\cite{DBLP:journals/toc/Zuckerman07}.

We view an independent set as a collection of tokens placed on the vertices of a graph such that no two tokens are adjacent.
This gives rise to two natural adjacency relations between independent sets, also called \emph{reconfiguration steps}.
In the {\sc Token Jumping} (TJ) problem, introduced by Kami\'{n}ski et al.~\cite{KMM12}, a single reconfiguration step consists of first removing a token on some vertex $u$ and then immediately adding it back on any other vertex $v$, as long as no two tokens become adjacent. The token is said to \emph{jump} from vertex $u$ to vertex $v$.
In the {\sc Token Sliding} (TS) problem, introduced by Hearn and Demaine~\cite{DBLP:journals/tcs/HearnD05}, two independent sets are adjacent if one can be obtained from the other by a token jump from vertex $u$ to vertex $v$ with the additional requirement of $uv$ being an edge of the graph. The token is then said to \emph{slide} from vertex $u$ to vertex $v$ along the edge $uv$. Note that, in both the TJ and TS problems, the size of independent sets is fixed.
Generally speaking, in the {\sc Token Jumping} and {\sc Token Sliding} problems,
we are given a graph $G$ and two independent sets $I_s$ and $I_t$ of $G$. The goal is to determine
whether there exists a sequence of reconfiguration steps (called a \emph{reconfiguration sequence}) that
transforms $I_s$ into $I_t$ (where the reconfiguration step depends on the problem).

We can reformulate the problem with the configuration graph. Given a graph $G$ we can define the \emph{configuration graph} $\R_k(G)$ as the graph whose vertices correspond to independent sets of size $k$ and where we put an edge between $I$ and $J$ if one can transform $I$ into $J$ in one step (under the token jumping variant).  There exists a reconfiguration sequence from $I$ to $J$ if and only if $I$ and $J$ belong to the same connected component of $\R_k(G)$.

Both problems have been extensively studied, albeit under
different names. They are PSPACE-complete, even restricted to bounded bandwidth (and hence pathwidth) graphs~\cite{WROCHNA14} and planar graphs~\cite{DBLP:journals/tcs/HearnD05}. Their complexity is also known (respectively PSPACE and NP) on bipartite graphs~\cite{DBLP:journals/talg/LokshtanovM19} and several polynomial algorithms exist in simpler classes such as trees~\cite{demaine_linear-time_2015} and interval graphs~\cite{DBLP:conf/wg/BonamyB17}. 

All along the paper we mainly focus on the Token Jumping model but all our lower bounds also hold for the Token Sliding version.

\paragraph{Diameter of the configuration graph and the $(6,3)$-problem.}
In many cases, the diameter of the configuration graph, even if connected, is not polynomial (and that is one of the reasons why most of the reconfiguration problems do not belong to NP). An important line of research has focused on finding conditions that ensure that the configuration. But the asymptotic behavior of maximum possible length of a shortest reconfiguration sequence has not been really studied. 
 The problem of determining "which graphs on $n$ vertices have the largest amount of independent sets?" has received considerable attention. On the contrary, the question "which graphs on $n$ vertices have a configuration graph of independent sets has the largest diameter?" has not, as far as we know, received any attention. 

The $3n$-vertex graph with the largest number of maximum independent sets is a disjoint collection of triangles which admits $3^{n}$ independent sets. In that case, one can easily remark that we can easily transform any maximum independent set into any other in $O(n)$ steps by replacing a vertex of a triangle by another (which can be done without conflict since the triangles are independent). 
So a graph whose configuration graph of independent sets has maximum diameter must have a completely different behavior. In this paper, we consider the following questions: what is the largest possible diameter of (a connected component of) the configuration graph amongst all the graphs of size $n$? What if we fix the size $k$ of the independent set we want to consider?

Let $k,n$ be two integers. Let us denote by $D(n,k)$ the maximum diameter, amongst all the graph $G$ on $n$ vertices, of a connected component of $\R_k(G)$. The goal of this paper mainly focuses on finding lower and upper bounds on $D(n,k)$.
There is a natural upper bound for $D(n,k)$ which is the maximum number ${n \choose k}$ of subsets of vertices size $k$.  We will prove in Section~\ref{sec:upper} that this bound cannot be reached and that the $D(n,k)$ is actually at most $O(n^{k-1})$. More precisely, we will prove that $D(n,k)\leqslant {n \choose k-1}$. 

We can easily prove that the order of magnitude of this bound is tight since, for $k=2$, the following holds as we will prove in Section~\ref{sec:lower}:

\begin{restatable}[]{theorem}{twoIS}
\label{thm:2IS}
 $D(n,2) = n-2$, and the complement of the $n$-vertex path is the unique tight example.
\end{restatable}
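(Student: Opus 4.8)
My plan is to identify $\R_2(G)$ with a line graph and then reason about geodesics. A size-$2$ independent set of $G$ is precisely an edge of the complement $\overline{G}$, and two such sets $\{a,b\}$ and $\{a,c\}$ are adjacent in $\R_2(G)$ (jump the token off $b$ onto $c$) exactly when, seen as edges $ab$ and $ac$ of $\overline{G}$, they share the endpoint $a$; hence $\R_2(G)\cong L(\overline{G})$. As $\overline{G}$ ranges over all $n$-vertex graphs, $D(n,2)$ equals the largest diameter of a connected component of $L(H)$ among $n$-vertex graphs $H$, and such a component is $L(C)$ for a connected component $C$ of $H$ that contains at least one edge. Throughout I will use the standard identity
\[
 d_{L(H)}(e,f)=1+\min\{\,d_C(x,y):x\in e,\ y\in f\,\},
\]
valid for distinct edges $e,f$ of a connected graph $C$.

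For the upper bound, fix such a $C$ and let $e,f$ be edges of $C$ with $d_{L(H)}(e,f)$ maximum, equal to $D:=\diam(L(C))$. If $D\le 1$ then $C$ has at least two edges, hence at least three vertices, and $D\le|V(C)|-2$; so assume $D\ge 2$, which forces $e\cap f=\emptyset$. Write $e=\{a,b\}$ and $f=\{c,d\}$ with $d_C(a,c)$ attaining the minimum of $d_C$ over the four endpoint pairs, so $d_C(a,c)=D-1$, and fix a geodesic $a=w_0,w_1,\dots,w_{D-1}=c$ in $C$. Minimality forces $b\notin\{w_0,\dots,w_{D-1}\}$ (otherwise $d_C(b,c)<D-1$) and likewise $d\notin\{w_0,\dots,w_{D-1}\}$, while $b\ne d$ because $e\cap f=\emptyset$; hence $b,w_0,\dots,w_{D-1},d$ are $D+2$ distinct vertices of $C$, so $D\le|V(C)|-2\le n-2$. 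This bound is attained by $H=P_n$, since $L(P_n)=P_{n-1}$ has diameter $n-2$ (equivalently $\R_2(\overline{P_n})$ is a path on $n-1$ vertices); therefore $D(n,2)=n-2$.

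For uniqueness, suppose a component of $\R_2(G)$ has diameter $n-2$, i.e.\ $H:=\overline{G}$ has a component $C$ with $\diam(L(C))=n-2$. Then every inequality above is an equality: $|V(C)|=n$, so $H$ is connected on all of $V(G)$; $D=n-2$; and $\{b,w_0,\dots,w_{D-1},d\}=V(H)$. Since $bw_0$, the geodesic edges $w_iw_{i+1}$, and $w_{D-1}d$ all belong to $H$, the sequence $P=b\,w_0\,w_1\cdots w_{D-1}\,d$ is a Hamiltonian path of $H$, so it remains to show that $P$ is induced, i.e.\ $H$ has no chord. My plan is to split this according to the position of a hypothetical chord: a chord between two internal vertices of $P$ creates a shortcut between the two ends of $P$, and — I claim — this already forces every pair of edges of $H$ to lie at $L(H)$-distance strictly below $n-2$, a contradiction; a chord incident to an extremity of $P$ must instead be excluded by comparing it against the edges near that extremity. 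This last step is the main obstacle: discarding interior chords is essentially mechanical, but a chord forming a short cycle with one end of $P$ perturbs the relevant distances only marginally, so handling it (together with a direct check of the statement for the smallest values of $n$) is where the genuine work lies.
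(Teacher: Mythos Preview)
Your derivation of $D(n,2)=n-2$ is correct and is essentially the paper's argument: both identify $\R_2(G)$ with $L(\overline{G})$, bound a geodesic in the line graph by exhibiting a path on the corresponding number of vertices in $\overline{G}$, and realise the bound with $\overline{G}=P_n$.

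The uniqueness part, however, cannot be completed, and the obstruction sits exactly where you flag it. A chord incident to an endpoint of your Hamiltonian path need not lower the diameter of $L(\overline{G})$, and in fact the uniqueness assertion is false as stated. Take $H=\overline{G}$ to be $P_n$ together with the extra edge $v_1v_3$ (a triangle glued to one end of the path), where $n\ge 4$. By your own distance formula,
\[
d_{L(H)}(v_1v_2,\,v_{n-1}v_n)\;=\;1+\min_{\substack{i\in\{1,2\}\\ j\in\{n-1,n\}}} d_H(v_i,v_j)\;=\;1+(n-3)\;=\;n-2,
\]
since the chord brings $d_H(v_1,v_{n-1})$ down to $n-3$ but no lower, while $d_H(v_2,v_{n-1})=n-3$ already; combined with the general upper bound this gives $\diam(L(H))=n-2$ although $H\neq P_n$. (For $n=4$ this $H$ is the paw, and one checks directly that $L(H)\cong K_4$ minus an edge, of diameter~$2$.) So the ``genuine work'' you anticipate in the endpoint-chord case is not merely delicate---it is impossible, because that case supplies counterexamples.

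The paper's own justification has the same gap: its chain $\diam(L(\overline{G}))\le\diam(\overline{G})-1\le n-2$ already fails at the first inequality for cycles (where $L(C_m)=C_m$) and in particular for the paw, so equality throughout does not force $\overline{G}=P_n$. In short, your proof of the value $D(n,2)=n-2$ is complete and matches the paper; the uniqueness clause, as written, is simply not true.
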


One can naturally wonder if this bound is still tight for larger values of $k$. The answer is negative since we can prove that this upper bound can actually be very slightly improved for every $k \ge 3$. Namely, we will prove that:

\begin{restatable}[]{theorem}{littleo}
\label{thm:bornesup_gen}
For $k\geqslant 3$, we have $D(n,k)=o(n^{k-1})$. 
\end{restatable}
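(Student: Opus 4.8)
Fix $k\ge 3$ and, aiming at a contradiction, suppose $D(n,k)\ge\varepsilon n^{k-1}$ for some fixed $\varepsilon>0$ and infinitely many $n$. Take a witnessing $n$-vertex graph $G$ and a shortest reconfiguration sequence $I_0,I_1,\dots,I_L$ with $L=D(n,k)$, and for $0\le i<L$ let $S_i\eqdef I_i\cap I_{i+1}$ be the $(k-1)$-set of tokens frozen at step $i$. Recall first that shortest-ness forces the $S_i$ to be pairwise distinct --- this is what gives the bound $D(n,k)\le\binom{n}{k-1}$ of Section~\ref{sec:upper} --- since if $S_i=S_j$ with $i<j$, then $I_i$ and $I_{j+1}$ are each obtained from $S_i$ by adding one vertex, hence differ by at most one token, so one may replace $I_i,\dots,I_{j+1}$ by the single step $I_i\to I_{j+1}$. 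Likewise the $I_i$ are pairwise distinct, and since $S_i\ne S_{i+1}$ we have $|S_i\cap S_{i+1}|=k-2$ and $S_i\cup S_{i+1}=I_{i+1}$. So the $S_i$ form a $(k-1)$-uniform hypergraph $\H$ on $V(G)$ with exactly $L\ge\varepsilon n^{k-1}$ edges, and the task is to derive a contradiction from the mere existence of a shortcut-free sequence realising such a dense $\H$.

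The plan is to run a removal-lemma argument, exactly as in the $k=3$ case treated in the paper. I would build from the sequence an auxiliary hypergraph $\H'$ on $O(n)$ vertices with $\Theta(L)$ edges, recording each frozen set $S_i$ together with its two moving vertices $a_i\in I_{i+1}\setminus I_i$ and $b_i\in I_i\setminus I_{i+1}$ and a bounded ``time stamp'' (which of a fixed constant number of equal-length blocks of $\{0,\dots,L-1\}$ the index $i$ lies in). The encoding is chosen so that a single copy in $\H'$ of some fixed small configuration $F$ can be turned into a local edit of the sequence that strictly decreases its length --- using the elementary shortcuts above, together with the possibility of freezing a vertex $v$ that stays put on a whole stretch of the sequence and recursing inside $G-N[v]$ (a shortest $(k-1)$-token subsequence there, else splice a shorter one and re-add $v$). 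By shortest-ness $\H'$ is then $F$-free. Crucially $F$ must be of ``$(6,3)$-type'' (non-$(k-1)$-partite, in the spirit of Ruzsa--Szemer\'edi), so that being $F$-free actually forces $o(n^{k-1})$ edges rather than merely excluding a blow-up; this is precisely where the hypergraph removal lemma (the $(k-1)$-uniform version) is invoked. It yields $|E(\H')|=o(n^{k-1})$, hence $L=o(n^{k-1})$, the desired contradiction.

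The main obstacle is to pin down a configuration $F$ meeting both requirements at once: being genuinely forbidden in every shortest sequence, and being of the type the removal lemma can exploit. The difficulty is that a repeated small pattern among the $S_i$ (or among their links $S_i\setminus\{v\}$ after freezing $v$) may occur at two places of the sequence that are far apart in time, where no single splicing is visible; this is what forces the coarse time-stamps into the encoding and a short case analysis on how the two occurrences interleave. I would also expect a naive induction on $k$ --- freeze a vertex $v$, note that each maximal stretch on which $v$ is frozen is a shortest sequence for $k-1$ in $G-N[v]$, and sum --- to get stuck precisely here, since there is no a priori bound on how often a single vertex leaves and re-enters the independent set along a shortest sequence; the removal-lemma input is what replaces such a bound.
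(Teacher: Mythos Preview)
Your proposal is a plan, not a proof, and you explicitly flag its central gap: you never identify the forbidden configuration $F$, nor establish that it is actually forbidden. The elaborate encoding with time-stamps and moving vertices, and the speculation about freezing a vertex and recursing on $k-1$, are symptoms of not having found the right hypergraph to feed to the removal lemma. As it stands nothing has been proved.

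The paper's argument is much cleaner and needs none of this machinery. Instead of taking the frozen $(k-1)$-sets $S_i$ as hyperedges, split the path by \emph{parity}: let $\mathcal P_1$ consist of the $I_i$ with $i$ odd, and take as hyperedges \emph{all} $(k-1)$-subsets of every $Z\in\mathcal P_1$. Each such $Z$ then spans a copy of the complete $(k-1)$-uniform hypergraph $K$ on $k$ vertices, so the target configuration is simply $K$ --- no exotic $F$ is needed. The parity split ensures that distinct $Z,Z'\in\mathcal P_1$ satisfy $|Z\cap Z'|\le k-2$, so each hyperedge lies in a unique $Z$. If the hypergraph contains more than $n^{k-1}$ copies of $K$, some copy has vertex set $V_0\notin\mathcal P_1$; each of its faces is independent in $G$ (it sits inside some $Z$), hence $V_0$ itself is an independent $k$-set of $G$, adjacent in $\mathcal R_k(G)$ to three distinct members of $\mathcal P_1$ --- impossible, since two of those must be at distance $\ge 4$ along the path. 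Otherwise there are at most $n^{k-1}=o(n^k)$ copies of $K$, the hypergraph removal lemma yields a hitting set of $o(n^{k-1})$ hyperedges, and since the copies coming from distinct $Z$'s are edge-disjoint this gives $|\mathcal P_1|=o(n^{k-1})$. The two ideas you are missing are thus: (i) work with the \emph{shadow} hypergraph of the $k$-sets rather than the single frozen set per step, which makes $F=K_k^{(k-1)}$; and (ii) replace your time-stamps by the parity split, which is exactly what guarantees that a phantom copy of $K$ produces a genuine shortcut.
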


The proof of Theorem~\ref{thm:bornesup_gen} is inspired from the upper bound proof of the $(6,3)$-problem and is based on an application of the hypergraph removal lemma. 
A hypergraph $\mathcal{H}$ is \emph{$(s,t)$-free} if no set of $s$ vertices of $\mathcal{H}$ contains at least $t$ hyperedges. The $(6,3)$-problem (or Ruzsa–Szemerédi problem) asks for the maximum number of hyperedges in a $(6,3)$-free $n$-vertex $3$-uniform hypergraph. The so-called $(6,3)$-theorem of Ruzsa-Szemerédi~\cite{ruzsa1978triple} ensures this value is $o(n^2)$.

This gain ($o(n^{k-1})$ versus $O(n^{k-1})$) might appear marginal but we can prove that, again, it cannot be widely improved. Namely we prove that the following holds:

\begin{restatable}[]{theorem}{threeIS}
\label{thm:3IS}
\[ D(n,3)= \Omega(n^2/e^{O(\sqrt{\log n})}). \]
\end{restatable}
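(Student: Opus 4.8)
The key here is the asymmetry between the Ruzsa–Szemerédi bounds: the $(6,3)$-theorem gives only $o(n^2)$ hyperedges, but the matching *lower bound* construction of Behrend-type (via sets of integers with no three-term arithmetic progression) yields $(6,3)$-free 3-uniform hypergraphs with $n^2/e^{O(\sqrt{\log n})}$ hyperedges, each hyperedge lying in an induced matching of triangles (a perfectly "exact" or "locally linear" structure). Our target bound $\Omega(n^2/e^{O(\sqrt{\log n})})$ matches exactly this lower-bound regime, which strongly suggests the strategy is to *embed* such a hypergraph into an independent-set-reconfiguration instance so that each hyperedge forces one "expensive" move. The plan is therefore: (i) take a 3-uniform hypergraph $H$ on roughly $n$ vertices with $m = n^2/e^{O(\sqrt{\log n})}$ edges whose edge set decomposes into induced matchings $M_1,\dots,M_t$ (this is the standard output of the Ruzsa–Szemerédi / Behrend construction, e.g.\ as in~\cite{ruzsa1978triple} or Fischer et al.); (ii) build a graph $G$ on $O(n)$ vertices and identify a start independent set $I_s$ and target $I_t$ of size $3$; (iii) show that every reconfiguration sequence from $I_s$ to $I_t$ must, for each hyperedge, pass through a distinct configuration — hence has length $\Omega(m)$.

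For the construction in step (ii), the natural idea is to make the configuration graph $\R_3(G)$ essentially a long path (or at least a graph whose relevant component has large diameter) by chaining together gadgets indexed by the matchings. I would place three "tracks" of vertices; a size-3 independent set keeps one token on each track, and sliding a token along its track is only permitted when the current positions of the other two tokens, together with the destination, do *not* form a hyperedge of $H$ (encode hyperedges as edges/non-edges of $G$ so that the forbidden triples are exactly the hyperedges). Because the hyperedge set is $(6,3)$-free — equivalently each vertex triple is "locally linear" — the obstruction pattern is sparse enough that one can route a valid sequence through; because it has $\Omega(m)$ hyperedges and they are arranged in induced matchings, the unique route is forced to detour around each one, and these detours are pairwise disjoint, so they contribute additively. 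Making this token-routing argument precise — designing $G$ so that the reachable configurations form exactly the intended long path and proving no shortcut exists — is the heart of the construction and where most of the work lies; I expect to model it after the authors' (claimed) "almost tight construction for $k=3$" mentioned in the abstract and to lean on the induced-matching structure to get the disjointness of detours.

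The main obstacle, I expect, is the lower-bound argument: showing that *no shortcut* exists, i.e.\ that any reconfiguration sequence really must cost $\Omega(m)$ steps rather than cleverly bypassing many gadgets at once. In a token-jumping model a token can jump anywhere, so one must argue that jumping "across" a gadget still requires paying for the intermediate hyperedges — this is typically done by a potential/progress function: define a monotone quantity (e.g.\ a lexicographic position vector of the three tokens, or a count of hyperedges "already crossed") that can change by at most $O(1)$ per step and must change by $\Omega(m)$ overall. Verifying that this potential is well-defined and Lipschitz under both token-jumping and token-sliding steps — and in particular that the $(6,3)$-freeness is exactly what prevents the potential from jumping — is the delicate point. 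A secondary technical issue is bookkeeping the vertex count: the Behrend construction naturally lives on $\Theta(N)$ integers giving $\Theta(N)$ hypergraph vertices and $\Theta(N^2/e^{O(\sqrt{\log N})})$ edges, and one must check the auxiliary gadget vertices only inflate $|V(G)|$ by a constant factor so that with $n = |V(G)|$ the bound reads $\Omega(n^2/e^{O(\sqrt{\log n})})$ as stated. I would finish by verifying that the same instance witnesses the bound for Token Sliding, consistent with the paper's remark that all lower bounds hold in that model as well.
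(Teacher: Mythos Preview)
Your proposal is a plan rather than a proof: the graph $G$, the potential function, and the no-shortcut argument are all left unspecified, and you yourself flag the central construction as ``where most of the work lies.'' More seriously, the route you sketch---three tracks with one token each, using the tripartite Ruzsa--Szemer\'edi hypergraph as a pattern of obstructions---runs into a difficulty the paper explicitly warns about: the classical $(6,3)$ lower-bound construction is tripartite and, in the authors' words, ``cannot be easily used in our construction since the construction is tripartite and then hard to reconnect into a configuration graph.'' Concretely, if the three tracks are pairwise complete (so a token must stay on its track), then the only size-$3$ independent sets are triples with one vertex per track, and you must simultaneously (a) leave enough such triples to make the component connected and (b) remove enough to force $\Omega(m)$ steps; you give no mechanism for this, and the induced-matching structure does not obviously supply one. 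Your Lipschitz-potential idea would also have to survive arbitrary token \emph{jumps}, not just slides, and no candidate potential is proposed.

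The paper's actual argument is structurally different and sidesteps all of this. It does \emph{not} embed a $(6,3)$-free hypergraph. Instead it proceeds in two steps. First (Lemma~\ref{lem:manycomp}), start from a clique on a prime $p$ and, for each $s$ in an odd $3$-AP-free set $S\subseteq[1,p/8]$, delete all edges $\{i,i+s\}$ and $\{i,i+2s\}$ (indices mod $p$); the $3$-AP-freeness of $S$ forces every size-$3$ independent set to be an arithmetic triple $\{i,i+s,i+2s\}$ for a single $s\in S$, so $\R_3$ is exactly a disjoint union of $|S|$ cycles of length $p$ (paths of length $p-3$ after deleting one vertex). Second (Lemma~\ref{lem:joincomp}), a general gluing lemma adds $O(k)$ fresh vertices per component to concatenate $r$ components of diameters $d_1,\dots,d_r$ into one component of diameter at least $\sum_i d_i$. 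With $|S|=n/e^{O(\sqrt{\log n})}$ paths of length $\Theta(n)$, gluing gives diameter $n^2/e^{O(\sqrt{\log n})}$ on $O(n)$ vertices. The ``no shortcut'' issue you worry about is essentially free here: in the first step $\R_3$ is \emph{literally} a union of paths, and the gluing gadget is built so that its only size-$3$ independent sets form a short path segment joining two prescribed endpoints---no potential function is needed.
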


The value $n/e^{O(\sqrt{\log n})}$ corresponds to the largest known asymptotic size for a subset of $[1,n]$ without arithmetic progressions of length $3$~\cite{behrend1946sets}. Any improvement of this bound would also imply an improvement of the bound of Theorem~\ref{thm:3IS}. Note that the best bound for the $(6,3)$-problem also has this order of magnitude~\cite{ruzsa1978triple}.

The $3$-reconfiguration problem is actually very close to the $(6,3)$-problem. Indeed, if we consider a shortest path in the $3$-configuration graph of $G$ and only consider even (resp. odd) vertices of that path, then we have a set of hyperedges of size $3$. And one can easily check that this set of hyperedges satisfy the $(6,3)$-property. So our result implies in particular that, given a set of size $n$, we can find two sets $X_1,X_2$ of $n^2/e^{O(\sqrt{\log n})}$ $3$-hyperedges such that both of them are $(6,3)$-free but whose union is "path-like", meaning that for every hyperedge (but at most two which are the endpoints of the path) there are two others hyperedges that intersect it on two vertices. 

The idea of the proof of Theorem~\ref{thm:3IS} consists in starting from a clique. We will then remove edges to create almost linearly many paths in the configuration graph of linear length. The involved part of the proof consists in showing that these paths remain independent of each other (i.e. there is no edge between them in the configuration graph) using a set $S$ of integers with no arithmetic progression of size $3$. We finally use a last trick to glue these paths together in order to obtain the claimed diameter. Note that the classical construction giving $n/e^{O(\sqrt{\log n})}$ hyperedges~\cite{ruzsa1978triple} for the $(6,3)$-problem cannot be easily used in our construction since the construction is tripartite and then hard to reconnect into a configuration graph.

We were not able to prove that the lower bounds and the upper bounds almost match for larger values of $k$. In particular, it is open to determine if the $4$-configuration graph can have super-quadratic diameter (while the upper bound is $o(n^3)$). We conjecture that the following holds:

\begin{conjecture}
\[D(n,4)=n^{3-o(1)}.\]
\end{conjecture}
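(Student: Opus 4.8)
The idea is to amplify the $3$-token construction of Theorem~\ref{thm:3IS} by adding a fourth ``clock'' token, in the same way that a two-digit counter needs a number of ticks equal to the product of the ranges of its digits. Fix the $m$-vertex graph $H$ underlying Theorem~\ref{thm:3IS}, together with two independent sets $A,B$ with $d_{\R_3(H)}(A,B)=L=m^{2-o(1)}$ and a shortest path $\pi=(\pi_0=A,\pi_1,\dots,\pi_L=B)$ witnessing this. We build $G$ on $n=m+\Theta(m)$ vertices as the disjoint union of $H$, a \emph{track} gadget $T$ admitting a forced monotone walk $t_0,t_1,\dots,t_r$ of a single token with $r=\Theta(m)$ (for instance $T$ an induced path, or a copy of the extremal graph of Theorem~\ref{thm:2IS}), plus a carefully chosen set of edges between $V(H)$ and $V(T)$. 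These edges should be designed so that, in $\R_4(G)$, the fourth token stays on $T$, the other three stay on $H$, and the fourth token may advance from $t_j$ to $t_{j+1}$ (or retreat) only while the three $H$-tokens sit in the configuration $A$ if $j$ is even and $B$ if $j$ is odd. A boustrophedon traversal — advance the clock, slide the three tokens from $A$ to $B$ along $\pi$, advance the clock, slide them back from $B$ to $A$, and so on — then reconfigures $(A,t_0)$ into the target configuration on track position $t_r$ in $\Theta(rL)=n^{3-o(1)}$ steps.

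Two things must be established: \textbf{(i)} that this sequence is legal, i.e.\ that the interaction edges can be chosen so the intended moves respect independence; and \textbf{(ii)} that there is no shortcut, i.e.\ that source and target are genuinely at distance $\Omega(rL)$ in $\R_4(G)$. Part (i) is routine bookkeeping once the gadget is pinned down. Part (ii) is the crux, and it is where I expect the main difficulty. The clean argument would be: since $t_0$ and $t_r$ are at distance $r$ on the forced track, any reconfiguration sequence must cross each ``door'' $\{t_j,t_{j+1}\}$; each crossing of door $j$ forces the $H$-tokens to be at $A$ (resp.\ $B$) for $j$ even (resp.\ odd); and between a crossing of door $j$ and the next crossing of door $j+1$ the $H$-tokens must travel between $A$ and $B$, which by the rigidity of $H$ costs $\ge L$ moves, so a careful accounting over the $\Theta(r)$ doors yields $\Omega(rL)$ moves. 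Turning this sketch into a proof requires ruling out all the ways the two sub-machines could decouple: the clock being advanced while the $H$-tokens are in an intermediate state of $\pi$, a token being ``parked'' on a track vertex to dodge a door constraint, or unintended adjacencies in $\R_4(G)$ that short-circuit $\pi$ or let the clock tunnel past several doors at once — the last being exactly the phenomenon that, for $k=3$, was controlled by using a Behrend set with no $3$-term progression. Keeping the $H$-dance rigid in the presence of the fourth token, and making the gating robust against parking tricks, plausibly needs a stronger arithmetic gadget than a $3$-AP-free set (e.g.\ a set avoiding a suitable four-variable linear relation, or a corner-free subset of a grid), which is presumably why the conjectured exponent carries an $o(1)$ rather than being exactly $3$.

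A sensible first target, weaker than the full conjecture, is an \emph{amplification lemma}: formulate a notion of ``rigid'' $3$-token construction, verify that the construction of Theorem~\ref{thm:3IS} is rigid, and show that any rigid construction witnessing $D(m,3)\ge L$ yields one witnessing $D(\Oh(m),4)\ge\Omega(mL)$; combined with Theorem~\ref{thm:3IS} this gives $D(n,4)\ge n^{3-o(1)}$. The same framework, applied on top of the $n^{2\lfloor k/3\rfloor}$ constructions, should also improve the general lower bound whenever $k\not\equiv 0\pmod 3$. Isolating a definition of rigidity that is simultaneously satisfied by the known construction and preserved under adding a token is itself the delicate point, and that is where I would begin.
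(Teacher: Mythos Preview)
The statement is a \emph{conjecture} in the paper, not a theorem: the authors explicitly say that it is ``open to determine if the $4$-configuration graph can have super-quadratic diameter'' and that they ``failed'' to push the $k=3$ construction further. There is therefore no proof in the paper to compare against; what can be assessed is whether your strategy plausibly closes the gap.

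It does not, and the obstacle is more basic than the difficulties you flag. In the Token Jumping model a single token carries no rigidity at all: for any graph $T$, the configuration graph $\R_1(T)$ is the complete graph on $V(T)$, since one may always remove the unique token and place it on any other vertex. Hence there is no such thing as a ``forced monotone walk $t_0,\dots,t_r$ of a single token'', regardless of whether $T$ is an induced path, the complement of a path, or anything else. The only constraints on a clock move come from the cross-edges to $H$: from a state $(S,t_j)$ the clock may jump in one step to \emph{any} $t_{j'}$ with $N(t_{j'})\cap S=\emptyset$. Your lower-bound sketch relies on the claim that every reconfiguration sequence ``must cross each door $\{t_j,t_{j+1}\}$'', but doors can simply be jumped over; the track positions compatible with the current $H$-state form a set inside which the clock teleports freely. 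If you try to make every track position incompatible with all but one $H$-state, the clock can no longer move at all and the $H$-tokens are frozen; if you leave unconstrained positions, the clock tunnels between them.

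The natural repair is to use \emph{two} clock tokens on the complement of a path, so that the pair is genuinely forced to advance one step at a time---but that is exactly Lemma~\ref{lem:LB_with_2IS} of the paper, and it yields only $D(n,4)\ge\Omega(n\cdot D(n,2))=\Omega(n^2)$, the bound already known. More generally, the paper's product constructions (Lemmas~\ref{lem:LB_with_2IS} and~\ref{lem:LB_with_3IS}) show that $p$ extra clock tokens buy an amplification of order roughly $D(n,p)$; since $D(n,1)=1$, a single clock token buys nothing. Your proposed ``amplification lemma'' $D(\Oh(m),4)\ge\Omega(m\cdot L)$ from a rigid $3$-construction of diameter $L$ is thus not attainable by a product with a one-token gadget, and proving the conjecture will require an idea that is genuinely different from stacking a clock on top of the $k=3$ construction.
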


A first step to prove super-quadratic diameter is to ensure that there exists a graph with a lot of copies of $K_4$ such that no two of them intersect on a triangle. This was recently shown to be true for any value of $k$. Namely, Gower and Janzer proved in~\cite{gowers2021generalizations} that, for every $k$ and every $n$, there exists an $n$-vertex graph with
$n^{k-1-o(1)}$ copies of $K_k$ such that every $K_{k-1}$ is contained in at most one $K_k$. This result might suggest that $D(n,k)=n^{k-1-o(1)}$.

Our construction for $k=3$ has to be drastically modified in order to work. Indeed, our construction is heavily based on the fact that we can find a graph with an almost linear number of linear paths in its $3$-configuration graph. To get a super-quadratic bound, we need to either increase the number of paths or their lengths. We failed trying both options.

However, in general, we were able to show that the following holds: 

\begin{restatable}[]{theorem}{generic}
\label{thm:secondbound}
For every integer $k$ we have
\[ D(n,k) = \frac{n^{2\lfloor k/3\rfloor}}{e^{O_k(\sqrt{\log n})}}\]\end{restatable}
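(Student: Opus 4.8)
The statement is really the lower bound $D(n,k)\ge n^{2\lfloor k/3\rfloor}/e^{O_k(\sqrt{\log n})}$ (for $k=3$ it reduces to Theorem~\ref{thm:3IS}), and the plan is to chain $m\eqdef\lfloor k/3\rfloor$ copies of the $k=3$ construction into a \emph{counter}, so that the diameters multiply. First I would isolate from the proof of Theorem~\ref{thm:3IS} the following: for every $n'$ there is a graph $H$ on at most $n'$ vertices, and two independent sets $a,b$ of size $3$, such that the connected component of $\R_3(H)$ containing $a$ is \emph{a path} $a=p_0-p_1-\dots-p_L=b$ with $L=\Omega\bigl((n')^2/e^{O(\sqrt{\log n'})}\bigr)$, together with some control over which vertices of $H$ carry a token along this path, in particular near its two ends. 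This should be essentially what the $k=3$ argument produces, since it builds the desired path by concatenating paths that already live inside a configuration graph, although extracting it cleanly may require minor adjustments.

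Given this, take $m$ vertex-disjoint copies $H^{(1)},\dots,H^{(m)}$ of $H$, each on $\Theta(n/m)$ vertices, and $k-3m\le 2$ extra isolated vertices carrying frozen tokens, for at most $n$ vertices in total; let $x_i\in\{0,\dots,L\}$ record the position of $H^{(i)}$ along its copy of the path. The next step is to add edges \emph{only between consecutive copies} so that, in the resulting graph $G$, the tokens of $H^{(1)}$ may always move, while for $i\ge 2$ the move of $H^{(i)}$ between positions $\ell$ and $\ell+1$ (in either direction) is allowed only when $H^{(1)},\dots,H^{(i-1)}$ all sit at a prescribed end of their path --- $b$ if $\ell$ is even, $a$ if $\ell$ is odd. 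This is precisely the transition rule of the $m$-digit radix-$(L+1)$ \emph{reflected Gray code}, so the intended long walk in $\R_k(G)$ runs $H^{(1)}$ up, carries one step into $H^{(2)}$, runs $H^{(1)}$ back down, carries again, and so on, with $H^{(2)}$ carrying into $H^{(3)}$ once it has overflowed, etc. If this walk traces out a whole connected component of $\R_k(G)$ that is a path, that component has $\Theta\bigl((L+1)^m\bigr)$ vertices, whence
\[
D(n,k)\ =\ \Omega\!\left(\Big(\tfrac{(n/m)^2}{e^{O(\sqrt{\log n})}}\Big)^{\!m}\right)\ =\ \frac{n^{2\lfloor k/3\rfloor}}{e^{O_k(\sqrt{\log n})}},
\]
the constant $m^{-2m}$ and the $m$ Behrend-type losses being absorbed into $e^{O_k(\sqrt{\log n})}$; the construction also works for Token Sliding, since every move used is a slide.

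The crux, and the step I expect to be the main obstacle, is to realise the carry rule with honest edges and to prove that the relevant component is indeed a path. There are two difficulties. First, the edges joining $H^{(i-1)}$ to $H^{(i)}$ must force ``$H^{(i-1)}$ is pinned at $a$'' (resp.\ at $b$) as a precondition for the move of $H^{(i)}$ between positions $\ell$ and $\ell+1$ with $\ell$ odd (resp.\ even), without disturbing the internal path structure of either copy; this is why I want control over which vertices of $H$ are occupied along the path, so that a move of $H^{(i)}$ can be blocked by joining its target vertex to vertices of $H^{(i-1)}$ that carry a token exactly when $H^{(i-1)}$ is \emph{not} at the prescribed end. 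Second, and more delicate, one must kill all shortcut edges: if two configurations differing only in the position of a single copy $H^{(i)}$ are both independent, they are automatically adjacent in $\R_k(G)$, even when far apart in the counter order, so the valid configurations cannot be taken to be all of $\{0,\dots,L\}^m$ --- the counter has to ``lock'' the lower copies in place while a higher one is mid-sweep, so that the valid configurations form a carefully carved sparse subset of the grid. Checking that some choice of inter-copy edges achieves exactly this, creating no spurious independent set of size $k$ and no spurious move, is where the real work lies; the remaining ingredients (the counting, and the identification of the carry rule with the reflected Gray code) are routine.
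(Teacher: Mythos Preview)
Your odometer intuition is right, and the paper's proof is also a counter, but it is built \emph{iteratively} via a step-up lemma (Lemma~\ref{lem:LB_with_3IS}) rather than by wiring $m$ copies in one shot. Given any $G$ with independence number exactly $k$ and two $k$-independent sets $A,B$ at distance $d$ in $\R_k(G)$, the paper attaches a fresh copy of the Behrend-based gadget $H$ from Lemma~\ref{lem:manycomp} (with vertices labelled mod $8$) and joins the vertices of $H$ congruent to $0\bmod 8$ to $V(G)\setminus A$, and those congruent to $4\bmod 8$ to $V(G)\setminus B$. These act as toll booths: whenever the three $H$-tokens occupy a triple containing a $0\bmod 8$ vertex, the $k$ tokens in $G$ are forced to equal $A$, and symmetrically for $4\bmod 8$ and $B$. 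Any traversal of a component of $\R_3(H)$ crosses $\Theta(n)$ toll booths alternating between the two types, and each alternation costs $d$ moves inside $G$; gluing the $|S|$ components with Lemma~\ref{lem:joincomp} gives diameter $\ge d\cdot n^2/e^{O(\sqrt{\log n})}$ for a graph of independence number $k+3$. Iterating $\lfloor k/3\rfloor$ times yields the theorem.

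Two points of contrast explain why the paper's route avoids the obstacle you flag. First, at each step the new copy $H$ is joined to \emph{all} of the previous construction, not only to the preceding copy: the toll booth asks the single predicate ``is the black box $G$ currently at $A$?'', enforced by complete bipartite edges to $V(G)\setminus A$. Your restriction to edges between consecutive copies is in direct tension with the Gray-code rule you want to realise (that \emph{all} lower digits sit at an extreme before digit $i$ ticks); with only $H^{(i-1)}$--$H^{(i)}$ edges there is no static way to lock $H^{(1)}$ while $H^{(m)}$ is mid-sweep, so the ``carefully carved sparse subset of the grid'' you allude to cannot be cut out by such edges alone. Second, the paper never claims the resulting component is a path and never has to kill shortcuts globally: the diameter lower bound is a pure toll-booth count, and that, together with the preservation of the hypothesis $\alpha(G')=k+3$ (which holds since $\alpha(H)=3$ and the gluing vertices of Lemma~\ref{lem:joincomp} sit in no independent set of size above $k+3$), is all the induction needs. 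So the ``real work'' you anticipate --- verifying that some wiring makes $\{0,\dots,L\}^m$ trace an induced path --- simply does not arise in the paper's argument.
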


For $k=4,5$, we can also ensure that the lower bound is quadratic. Actually, what we prove is slightly stronger but can be asymptotically summarized with Theorem~\ref{thm:secondbound}. 

The idea of the proof of Theorem~\ref{thm:secondbound} consists in successively adding a graph (inspired by) the construction of Theorem~\ref{thm:3IS} and connecting it in a clever way to the previous graph to increase the diameter quadratically while increasing the size of the independent set by $3$. Note that a super-quadratic lower bound for $k=4$ might lead to an improvement of this general lower bound as long as there is a clever gluing.

Observe that the asymptotic estimate in Theorem~\ref{thm:secondbound} depends on $k$, and hence may not hold when $k$ is not constant, for example when $k$ is linear in $n$. Constructing graphs that maximize the diameter of a connected component in their $k$-configuration graphs (regardless of the value of $k$) is a question raised during the Core Challenge 2022~\cite{SohOI22} for graphs on $10, 50$ and $100$ vertices. Our team proposed a generic construction that obtained the best results. Rewritten in the current formalism, our statement from~\cite{SohOI22} becomes:

\begin{lemma}
\label{lem:core}
For every integer $n$, there exists a graph $G$ on $10n$ vertices such that its $\R_{3n}(G)$ is a path of length $\Theta(4^n)$. In particular $D(n,\frac{3n}{10})=\Omega(2^{n/5})$.
\end{lemma}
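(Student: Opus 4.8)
The plan is to realize $G$ as a chain of $n$ identical \emph{gadgets} $H_1,\dots,H_n$, where each $H_i$ is a copy of a fixed $10$-vertex graph $H$ with independence number $3$, together with a carefully chosen set of edges between consecutive gadgets $H_i$ and $H_{i+1}$. Since $G$ has $10n$ vertices and each gadget carries at most $3$ pairwise non-adjacent vertices, any independent set $I$ of size $3n$ must meet every $H_i$ in exactly $3$ vertices (a pigeonhole argument). Hence $I$ is described by a tuple $(\sigma_1,\dots,\sigma_n)$, where $\sigma_i$ ranges over the finitely many independent triples of $H$; the inter-gadget edges cut this down to the tuples whose consecutive pairs are \emph{compatible}, and a single token jump changes exactly one coordinate $\sigma_i$ to a locally adjacent triple. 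The whole point of the construction is to choose $H$ and the connecting edges so that the compatible tuples and their legal one-step moves are exactly those of a base-$4$ odometer (equivalently, of a reflected base-$4$ Gray code): $\sigma_i$ may move only when $\sigma_1,\dots,\sigma_{i-1}$ sit in one specified ``about to carry'' configuration, and symmetrically for the reverse direction. Using a $4$-letter digit alphabet rather than a binary one is precisely what upgrades the classical $2^n$ odometer bound to $4^n$.

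Granting such a gadget, the proof that $\R_{3n}(G)$ is a single path breaks into three checks. \textbf{Bounded degree:} every vertex of $\R_{3n}(G)$ has degree at most $2$, because at any configuration only the ``lowest unlocked'' gadget can move, and within that gadget at most two local triples are reachable in one step (one towards each end of the counter); this is a finite verification on $H$ and on the compatibility relation between two adjacent gadgets. \textbf{Acyclicity:} the map sending a configuration to the integer it encodes, extended consistently to the intermediate configurations produced by a carry cascade, is a strictly monotone potential along every edge, so the finite graph $\R_{3n}(G)$, having maximum degree $2$ and no cycle, is a disjoint union of paths. \textbf{Connectivity:} every size-$3n$ independent set reduces, by induction on $n$, to the canonical all-zero configuration exactly as one empties a counter, so there is only one path.

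For the length, note first that the $4^n$ distinct digit tuples give $4^n$ distinct independent sets, so the path has at least $4^n-1$ edges. For the matching upper bound one uses the standard amortized count: over a full run of the counter the $i$-th coordinate changes $\Theta(4^{n-i})$ times, and each such change is realized by $O(1)$ token jumps, so the total number of edges is $\sum_{i=1}^{n}\Theta(4^{n-i})=\Theta(4^n)$. Thus $\R_{3n}(G)$ is a path of length $\Theta(4^n)$, which yields $D(10n,3n)=\Omega(4^n)$; writing $m=10n$ gives $D(m,3m/10)=\Omega(4^{m/10})=\Omega(2^{m/5})$, as claimed.

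The main obstacle is the simultaneous design of $H$ and of the inter-gadget edges so that all of the following hold at once: each gadget's independence number is exactly $3$ (so that the pigeonhole step is exact and there is still room to move inside a gadget); the configuration graph has maximum degree $2$, i.e.\ no ``unexpected'' token jump ever creates a branch or a shortcut between two parts of the intended path; there are no short cycles that would collapse the diameter; every size-$3n$ independent set is a genuine counter configuration, with no stray solutions; and the recursion multiplies the number of configurations by exactly $4$ at each level. The bounded-degree and acyclicity/connectivity analyses are the bulk of the work; they reduce to a bounded computation on the $10$-vertex gadget together with its interaction with one neighbour, followed by an induction on $n$, but getting the gadget right so that this case analysis goes through is the delicate point.
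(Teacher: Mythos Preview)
Your outline is essentially the approach the paper has in mind; in fact the paper does not prove Lemma~\ref{lem:core} either, but merely imports it from~\cite{SohOI22} with a one-line hint: the gadget is the complement $\overline{P_{10}}$ of a path on ten vertices (so its size-$3$ independent sets are exactly the eight triples of consecutive vertices, and $\R_3(\overline{P_{10}})$ is itself a path), and consecutive gadgets are wired together ``in a similar fashion to the proof of Lemma~\ref{lem:LB_with_2IS}'', i.e.\ a few vertices of each gadget are made complete to all of the previous gadgets except for one fixed extremal triple, playing the role of the toll booths $x_{6\ell-3},x_{6\ell}$ there. So your pigeonhole reduction to tuples of local triples, the observation that every token jump stays inside a single gadget, and the recursive counter/Gray-code analysis are exactly the intended picture.

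The one genuine gap in your write-up is that you never commit to a concrete $H$ or to concrete inter-gadget edges; you explicitly say ``granting such a gadget'' and close by admitting that designing it is ``the delicate point''. That leaves the three checks (degree $\le 2$, acyclicity, connectivity) as unexecuted promises, and more importantly it leaves the base of the exponential unexplained: nothing in your outline forces the answer to be $4^n$ rather than, say, $8^n$ (which a naive reading of ``eight triples per gadget'' would suggest) or $2^n$. The $4$ comes out of the specific placement of the toll-booth edges between $\overline{P_{10}}$'s and is not visible at the level of generality you work at. Since the paper itself defers these details to~\cite{SohOI22}, your sketch is comparable in depth to what the paper gives; but as a self-contained proof it still needs the explicit gadget (take $H=\overline{P_{10}}$) and the finite verification that the resulting $\R_{3n}$ is indeed a path of the claimed length.
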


Note that we also give a construction showing that $D(n,\frac{2n}{5})=\Omega(2^{n/5})$ (with a slightly worse constant than in Lemma~\ref{lem:core}). Roughly speaking, these graphs are constructed by adding edges between complements of paths on $10$ and $5$ vertices respectively, in a similar fashion to the proof of the upcoming Lemma~\ref{lem:LB_with_2IS}. In particular, those two constructions can be combined and yield the following.

\begin{theorem}
\label{thm:nok}
For every $n$ and every $k$ such that $3n/10\leqslant k \leqslant 2n/5$, $D(n,k)=\Omega(2^{n/5})$. 
\end{theorem}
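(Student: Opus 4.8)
The plan is to interpolate between the two extreme constructions mentioned in the excerpt: the one giving $D(n,3n/10)=\Omega(2^{n/5})$ (Lemma~\ref{lem:core}, built from complements of $10$-vertex paths) and the one giving $D(n,2n/5)=\Omega(2^{n/5})$ (built from complements of $5$-vertex paths). Both are obtained by the same mechanism — take a disjoint union of ``gadgets,'' each of which is a complement of a path, add a fixed pattern of edges between consecutive gadgets as in the proof of Lemma~\ref{lem:LB_with_2IS}, and argue that the resulting configuration graph on the appropriate token count is a single long path. The key observation is that in the $10$-vertex gadget we place $3$ tokens and use $10$ vertices (ratio $3/10$), whereas in the $5$-vertex gadget we place $2$ tokens and use $5$ vertices (ratio $2/5$), and crucially \emph{both} gadgets contribute the same multiplicative factor (a constant $c>1$, with $c^{\#\text{gadgets}}=\Theta(2^{n/5})$ since $n/5$ is the number of gadgets in each pure construction) to the length of the path in the configuration graph.

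First I would fix $n$ and $k$ with $3n/10\le k\le 2n/5$ and write $n=10a+5b$ and $k=3a+2b$ for suitable nonnegative integers $a,b$ — this is exactly the condition $3n/10 \le k \le 2n/5$, since eliminating $b$ gives $k=3a+2\cdot\frac{n-10a}{5}=\frac{2n}{5}-a$, so $a=\frac{2n}{5}-k$ ranges over $[0,\frac{2n}{5}-\frac{3n}{10}]=[0,\frac{n}{10}]$ and correspondingly $b=\frac{n-10a}{5}=\frac{2k}{5}-\frac{n}{5}\cdot(\dots)$, which is a nonnegative integer under the stated hypotheses (up to rounding issues which I would absorb by allowing one slightly smaller gadget, changing the bound only by a constant factor). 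Then I would build $G$ as the ``chain'' of $a$ copies of the $10$-vertex gadget followed by $b$ copies of the $5$-vertex gadget, inserting between every two consecutive gadgets the same inter-gadget edges used in Lemma~\ref{lem:LB_with_2IS}/Lemma~\ref{lem:core}, and designating in each gadget the fixed number of tokens ($3$ per $10$-gadget, $2$ per $5$-gadget), for a total of $k=3a+2b$ tokens on $n=10a+5b$ vertices.

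Next I would verify that $\R_k(G)$ restricted to the relevant component is still a path. The structure of the gluing is a ``counter'': each gadget, in isolation, behaves like a bounded-length path in its own small configuration graph, and the inter-gadget edges force the gadgets to advance in an odometer-like fashion — a gadget can only take a step once the gadgets ``below'' it have cycled through all their states. This is precisely the argument in the proof of Lemma~\ref{lem:LB_with_2IS}, which I would invoke essentially verbatim; the only new point to check is that a $10$-gadget and a $5$-gadget interact correctly when placed adjacent to each other, which reduces to checking the inter-gadget edge pattern is compatible, a finite case analysis. The resulting path length is the product of the per-gadget contributions, i.e. $\Theta\!\left(\lambda_{10}^{a}\,\lambda_5^{b}\right)$ where $\lambda_{10},\lambda_5$ are the per-gadget multipliers from the two pure constructions; since both pure constructions yield $\Omega(2^{n/5})$ with $a=n/10$ resp. $b=n/5$ gadgets, we have $\lambda_{10}\ge 2$ and $\lambda_5\ge\sqrt{2}$ (again up to the constants hidden in Lemma~\ref{lem:core}), so $\lambda_{10}^{a}\lambda_5^{b}\ge 2^{a+b/2}=2^{(10a+5b)/10\cdot(\text{const})}=2^{\Omega(n)}$; tracking the exponent carefully gives $2^{\Omega(n/5)}=\Omega(2^{n/5})$ uniformly over the whole range of $k$.

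The main obstacle I expect is not the counting but the \emph{gluing compatibility}: ensuring that the inter-gadget edges designed for two $10$-gadgets (respectively two $5$-gadgets) still work at the single ``seam'' where a $10$-gadget meets a $5$-gadget, so that no spurious edges appear in $\R_k(G)$ that would short-circuit the path, and no needed transition is blocked. Handling this cleanly may require a slight redesign of the generic gadget in Lemma~\ref{lem:LB_with_2IS} so that its ``interface'' to the next gadget is the same regardless of the gadget's internal size — i.e. isolating a uniform ``docking'' sub-structure. Once the interface is uniform, chaining gadgets of heterogeneous sizes is routine, and the diameter lower bound follows by multiplying the contributions as above.
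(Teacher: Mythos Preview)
Your approach is exactly what the paper does --- indeed the paper's entire ``proof'' is the one sentence that the two constructions ``can be combined,'' and you have correctly fleshed out that combination via the decomposition $n=10a+5b$, $k=3a+2b$. One correction to your exponent bookkeeping: from Lemma~\ref{lem:core} the per-gadget multiplier for the $10$-gadget is $4$ (since $n/10$ gadgets give $\Theta(4^{n/10})$), and for the $5$-gadget it is $2$ (since $n/5$ gadgets give $\Omega(2^{n/5})$), not $2$ and $\sqrt{2}$ as you wrote; with the correct values the product is $4^a\cdot 2^b=2^{2a+b}$, and the linear relation $10a+5b=n$ gives the clean identity $2a+b=n/5$, so the bound $\Omega(2^{n/5})$ falls out exactly and uniformly in $k$ --- your weaker estimates would only have yielded $2^{a+b/2}=2^{n/10}$.
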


We believe it is quite surprising that this lower bound holds for such a range of values of $k$, and thus raise the following question.

\begin{question}
What is the asymptotic behavior of $\max_k D(n,k)$?
\end{question}

\section{Generic upper bounds}\label{sec:upper}

We start this section with a preliminary upper bound on $D(n,k)$. 

\begin{lemma}\label{lem:diam-kIS}
$D(n,k)\leqslant {n\choose k-1}$.
\end{lemma}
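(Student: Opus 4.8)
The plan is to charge each reconfiguration step to the $(k-1)$-set of tokens it leaves untouched, and then to show, via a shortcutting argument, that no $(k-1)$-set is charged twice. Fix an $n$-vertex graph $G$, a connected component $\C$ of $\R_k(G)$, and a shortest path $I_0,I_1,\dots,I_\ell$ in $\C$ between two of its vertices at distance $\ell=\diam(\C)$. For each $i\in\intv{0}{\ell-1}$, the step from $I_i$ to $I_{i+1}$ is a single token jump, so $I_i$ and $I_{i+1}$ agree on exactly $k-1$ vertices; put $S_i\eqdef I_i\cap I_{i+1}$, a $(k-1)$-subset of $V(G)$. The whole statement then reduces to showing that $i\mapsto S_i$ is injective, since there are only $\binom{n}{k-1}$ possible values and hence $\ell\le\binom{n}{k-1}$.

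To prove injectivity, I would suppose $S_i=S_j=:S$ for some $0\le i<j\le\ell-1$ and derive a contradiction from minimality. Write $a$ for the vertex leaving $I_i$ at step $i$ and $d$ for the vertex entering $I_{j+1}$ at step $j$, so that $I_i=S\cup\{a\}$ and $I_{j+1}=S\cup\{d\}$; in particular both $I_i$ and $I_{j+1}$ are independent sets of size $k$ containing the common $(k-1)$-set $S$. Since $i<j+1$ and the path is simple, $I_i\ne I_{j+1}$, hence $a\ne d$ and the two sets differ in exactly one vertex, so $I_iI_{j+1}$ is an edge of $\C$. Replacing the path by $I_0,\dots,I_i,I_{j+1},\dots,I_\ell$ then gives a walk from $I_0$ to $I_\ell$ of length $\ell-(j-i)<\ell$, contradicting the fact that these endpoints lie at distance $\ell$. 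Therefore the $S_i$ are pairwise distinct, which is exactly the bound claimed.

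The argument is entirely elementary and there is no real obstacle; the only point to get right is the choice of shortcut — comparing $I_i$ with $I_{j+1}$ (rather than, say, $I_{i+1}$ with $I_j$) makes the replacement strictly shortening for every pair $i<j$, including the degenerate case $j=i+1$ of two coinciding $(k-1)$-sets on consecutive edges. No extremal or removal-lemma input is needed at this stage; that machinery enters only later, to push the bound down to $o(n^{k-1})$ in Theorem~\ref{thm:bornesup_gen}.
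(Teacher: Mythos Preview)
Your proof is correct and follows essentially the same approach as the paper: associate to each edge of a shortest path the $(k-1)$-set common to its endpoints, and argue this map is injective because a repeated value would create a shortcut. The only cosmetic difference is that the paper phrases the contradiction as ``three pairwise adjacent vertices on a shortest path'' rather than exhibiting the explicit shortcut $I_i\to I_{j+1}$, but the content is identical.
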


\begin{proof}
Consider a shortest path $\mathcal{P}$ in the $k$-configuration graph of an $n$-vertex graph $G$. With each edge of $\mathcal{P}$, we associate the $k-1$ vertices of the intersection of the independent corresponding to its endpoints. This defines a mapping from $E(P)$ to sets of $k-1$ vertices of $G$. Since there are $n^{k-1}$ such sets, we simply have to show that this mapping is injective. 
Assume that two distinct edges are mapped to the same set $X$ of $k-1$ vertices. Then $X$ belongs to at least three distinct independent sets that are vertices of $\mathcal{P}$. These three independent sets are pairwise adjacent, which is impossible since $\mathcal{P}$ is a shortest path.
\end{proof}

We will see that this bound is sharp for $k=2$. However, when $k$ increases this bound can be slightly improved, as summarized in Theorem~\ref{thm:bornesup_gen} that we recall below.

\littleo*

\begin{proof}
Consider a graph $G$ on $n$ vertices whose configuration graph has maximum diameter $d$. 
Let $\P=Z_1, Z_2, \ldots, Z_d$ be a shortest path of length $d$ in $\R_k(G)$. Let us partition the nodes in $\P$ into two sets $\P_1$ and $\P_2$ where $\P_1$ (resp. $\P_2$) is the set of odd (resp. even) nodes of $\P$. Note that if we consider two subsets of $\P_i$ for $i \le 2$ then their intersection has size at most $k-2$ (otherwise $\P$ would not be induced). 

For every $i \le 2$, let $H_i$ be the $(k-1)$-uniform hypergraph whose vertices are the same as for $G$ and whose hyperedges are the independent sets of size $k-1$ contained in some set of $\P_i$. Moreover, denote by $K$ the $(k-1)$-uniform hyperclique on $k$ vertices. Observe that by construction, each $Z\in \P_i$ creates (exactly) one copy of $K$ in $H_i$. Also note that every subset of size $k-1$ of such a $Z$ belongs to exactly one independent set of $\P_i$ since otherwise the two independent sets would be adjacent, contradicting the minimality of $\P$.  We now distinguish two cases:
\smallskip

\noindent
\textbf{Case 1.} $H_i$ contains more than $n^{k-1}$ copies of $K$. \\ Since by Lemma~\ref{lem:diam-kIS}, at most $n^{k-1}$ are created by some $Z\in\P_i$, there exists a copy of $K$ in $H_i$ such that $V(K)\notin \P_i$. Consider now three hyperedges $e_1,e_2,e_3$ in $K$ that pairwise intersect on $k-2$ vertices (note that this is possible since $k\geqslant 3$). 
    
    By construction, each of these hyperedges are contained in some element of $\P_i$, so there exist $x_1,x_2,x_3\in V(G)$ such that $e_j\cup\{x_j\}\in \P_i$ for $j=1,2,3$. In particular, each $e_j$ is an independent set of $G$, therefore $e_1\cup e_2\cup e_3$ is also an independent set of $G$ of size $k$. Therefore all the $e_j\cup\{x_j\}$'s are at distance at most 2 from each other in $\R_k(G)$ since all of them are adjacent to $e_1\cup e_2\cup e_3$. This is a contradiction since $\P$ is a shortest path and $\P_1$ (resp. $\P_2$) only contains even (resp. odd) vertices of $\P$ and then two of the three independent sets $e_j\cup\{x_j\}\in \P_i$ for $j \le 3$ should be at distance at least $4$.
    \smallskip

\noindent \textbf{Case 2.} $H_i$ contains at most $n^{k-1}=o(n^k)$ copies of $K$. \\
By the hypergraph removal lemma~\cite{rodl2005hypergraph,gowers2007hypergraph}, there exists a set $S$ of hyperedges of $H$ such that $|S|=o(n^{k-1})$ and $H-S$ contains no copy of $K$. Recall that each hyperedge of $S$ is contained in exactly one element of $\P_1$, and each element of $\P_i$ creates a copy of $K$ in $H$, therefore we get $|\P_i|\leqslant |S|=o(n^{k-1})$.

To conclude, observe that $d\leqslant 2|\P_i|=o(n^{k-1})$ for every $i \le 2$.
\end{proof}

\section{Lower bounds}\label{sec:lower}

\subsection{Independent sets of size 2}

In this section, our main goal is to prove Theorem~\ref{thm:2IS} that we recall below.

\twoIS* 

We thus consider the independent sets of size $2$ of a graph $G$. Note that these sets are exactly the non-edges of $G$, \emph{i.e.} the edges of $\bar{G}= (V, \P_2(V) \setminus E)$. Therefore, we get the following observation. 

\begin{observation}
The configuration graph $\R_2(G)$ is the line graph of $\bar{G}$. 
\end{observation}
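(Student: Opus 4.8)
The plan is to prove the statement by directly identifying the two graphs through the canonical bijection between the non-edges of $G$ and the edges of $\bar G$, checking separately that the vertex sets and the edge sets match.

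First I would match the vertices. The vertices of $\R_2(G)$ are the independent sets of $G$ of size $2$, that is, the pairs $\{u,v\}$ with $uv\notin E(G)$ — which are exactly the edges of $\bar G$, i.e.\ the vertices of $L(\bar G)$. So the map $\phi$ sending each size-$2$ independent set $\{u,v\}$ of $G$ to the edge $uv$ of $\bar G$ is a bijection between $V(\R_2(G))$ and $V(L(\bar G))$.

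Next I would check that $\phi$ preserves adjacency in both directions. In the Token Jumping model a single reconfiguration step removes exactly one token and places it elsewhere, so two size-$2$ independent sets $I,J$ of $G$ are adjacent in $\R_2(G)$ if and only if $|I\cap J|=1$: writing $I=\{a,b\}$ and $J=\{a,c\}$ with $b\neq c$, jumping the token from $b$ to $c$ is legal precisely because $J$ is independent; conversely, if $I$ and $J$ differ by one jump they share exactly one vertex. The only point requiring care here is to note that two \emph{disjoint} size-$2$ independent sets are \emph{not} adjacent in $\R_2(G)$, since transforming one into the other would require moving both tokens. On the other side, two distinct edges of $\bar G$ are adjacent in $L(\bar G)$ if and only if they share an endpoint, and since $\bar G$ is simple this happens if and only if their vertex sets meet in exactly one vertex. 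Hence $\{u,v\}$ and $\{u',v'\}$ are adjacent in $\R_2(G)$ iff $|\{u,v\}\cap\{u',v'\}|=1$ iff $uv$ and $u'v'$ are adjacent in $L(\bar G)$.

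Since $\phi$ is a bijection on vertices that preserves both adjacency and non-adjacency, it is a graph isomorphism, so $\R_2(G)$ is (isomorphic to) $L(\bar G)$. There is no real obstacle in this argument; the single subtlety, as noted above, is the fact that one token jump alters exactly one element of the independent set, which is what rules out disjoint pairs being adjacent and makes the adjacency relations of the two graphs coincide.
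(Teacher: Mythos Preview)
Your proof is correct and follows exactly the natural approach the paper has in mind: the paper treats this as an immediate observation, simply noting beforehand that size-$2$ independent sets of $G$ are precisely the edges of $\bar G$, and your argument just spells out the adjacency check that makes this identification a graph isomorphism.
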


Note that for every graph $G$, any induced path on $p$ vertices in $L(G)$ corresponds to a path on $p$ edges in $G$. In particular, we derive two consequences.

\begin{observation}
Let $A,B$ be two independent sets of $G$ of size $2$ and $a \in A, b \in B$. There is a TJ-transformation from $A$ to $B$ if and only if $a,b$ are in the same connected component of~$\overline{G}$.
\end{observation}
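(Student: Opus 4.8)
The plan is to reduce the statement to the classical correspondence between walks in a graph $H$ and walks in its line graph $L(H)$, applied here to $H=\overline{G}$.

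First I would use the observation $\R_2(G)=L(\overline{G})$ together with the fact that a TJ-transformation from $A$ to $B$ is, by definition, a walk in $\R_2(G)$ starting at the vertex $A$ and ending at the vertex $B$. Hence such a transformation exists if and only if $A$ and $B$ lie in the same connected component of $L(\overline{G})$. It therefore suffices to show that two edges $A,B$ of $\overline{G}$ lie in a common component of $L(\overline{G})$ exactly when they have endpoints in a common component of $\overline{G}$. Since the two endpoints of an edge of $\overline{G}$ are themselves adjacent in $\overline{G}$, and thus in the same component, this last condition does not depend on the choice of $a\in A$ and $b\in B$, and is equivalent to asking that $a$ and $b$ lie in the same connected component of $\overline{G}$. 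This bookkeeping step is where I would be slightly careful, as the statement quantifies over arbitrary elements of $A$ and $B$.

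Second I would verify the line-graph fact itself. For one direction, a path $A=e_0,e_1,\dots,e_m=B$ in $L(\overline{G})$ is, by definition of the line graph, a sequence of edges of $\overline{G}$ in which consecutive edges share a vertex; concatenating these shared vertices produces a walk in $\overline{G}$ from an endpoint of $A$ to an endpoint of $B$, so $a$ and $b$ are in the same component of $\overline{G}$. Conversely, writing $A=\{a,a'\}$ and $B=\{b,b'\}$, any walk $a=u_0,u_1,\dots,u_\ell=b$ in $\overline{G}$ yields the walk $A,\{u_0u_1\},\{u_1u_2\},\dots,\{u_{\ell-1}u_\ell\},B$ in $L(\overline{G})$, since consecutive terms share a vertex, $A$ meets $\{u_0u_1\}$ at $u_0=a$, and $B$ meets $\{u_{\ell-1}u_\ell\}$ at $u_\ell=b$; the degenerate case $a=b$ simply means $A$ and $B$ already share a vertex, so they are equal or adjacent in $L(\overline{G})$. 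In all cases $A$ and $B$ lie in the same component of $\R_2(G)=L(\overline{G})$, which completes the argument.

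I do not expect a genuine obstacle here. The only points needing care are matching a single TJ-step with an edge of $L(\overline{G})$ — a step keeps one token fixed and moves the other to a vertex non-adjacent in $G$ to it, i.e.\ it replaces an edge of $\overline{G}$ by an edge of $\overline{G}$ sharing a vertex with it — and the trivial degenerate case where the connecting walk in $\overline{G}$ has length $0$.
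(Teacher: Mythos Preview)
Your proposal is correct and follows essentially the same approach as the paper: the paper derives the observation directly from the identification $\R_2(G)=L(\overline{G})$ together with the standard correspondence between paths in a graph and paths in its line graph, and you have simply spelled out the details of that correspondence (including the care about the choice of $a\in A$, $b\in B$ and the degenerate cases). There is nothing to add.
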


We can also obtain the following which ensures that the bound of Lemma~\ref{lem:diam-kIS} is tight:

\begin{lemma}
For every $n$-vertex graph $G$, \[\diam(\R_2(G))=\diam(L(\bar{G}))\leqslant \diam(\bar{G})-1\leqslant n-2.\]
\end{lemma}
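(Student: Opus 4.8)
The plan is to go through the identification $\R_2(G)\cong L(\bar G)$ from the preceding Observation and bound the diameter of each connected component of $L(\bar G)$ (this is what is relevant, since $\R_2(G)$ need not be connected). A connected component of $L(\bar G)$ is $L(H)$ for a connected component $H$ of $\bar G$ carrying at least one edge; if $H$ has exactly one edge then $L(H)$ is a single vertex and there is nothing to do, so I may assume $H$ has at least two edges, hence $m:=|V(H)|\ge 3$ and $m\le n$. It then suffices to prove $\diam(L(H))\le m-2$.

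The key step is to turn a geodesic of the line graph into an honest path of $H$. Fix a shortest path $\mathcal{P}=e_0,e_1,\dots,e_d$ in $L(H)$ with $d=\diam(L(H))$, choose a vertex $v_i\in e_i\cap e_{i+1}$ for each $0\le i\le d-1$, let $w$ be the endpoint of $e_0$ other than $v_0$, and let $w'$ be the endpoint of $e_d$ other than $v_{d-1}$. Since $\mathcal{P}$ is induced, no two of the $e_i$ coincide; this forces $v_{i-1}\ne v_i$ and hence $e_i=\{v_{i-1},v_i\}$ for $1\le i\le d-1$, while $e_0=\{w,v_0\}$ and $e_d=\{v_{d-1},w'\}$. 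Therefore
\[
w,\ v_0,\ v_1,\ \dots,\ v_{d-1},\ w'
\]
is a walk in $H$ traversing the $d+1$ edges $e_0,\dots,e_d$ in order, and (using the line-graph distance formula $d_{L(H)}(e,f)=1+\min\{d_H(p,q):p\in e,\ q\in f\}$, so that no shortcut in $H$ is possible) its middle part $v_0,\dots,v_{d-1}$ is in fact a shortest path of $H$ — which is the reason distances in $\bar G$ control $d$.

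It remains to check that the $d+2$ listed vertices are pairwise distinct; then $d+2\le m\le n$, giving $d\le n-2$ and finishing the proof. Every possible coincidence is excluded by minimality of $\mathcal{P}$: if $v_i=v_j$ with $i<j$ then $e_i$ and $e_{j+1}$ are distinct edges sharing that vertex, hence adjacent in $L(H)$, contradicting that $\mathcal{P}$ puts them at distance $j+1-i\ge 2$; the same device (together with $e_{d-1}\ne e_d$, which disposes of the single borderline case, and the trivial handling of $d\le 1$ when $m\ge 3$) shows $w\notin\{v_0,\dots,v_{d-1}\}$, $w'\notin\{v_0,\dots,v_{d-1}\}$, and $w\ne w'$. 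I expect this collision bookkeeping to be the only slightly delicate point; the reduction to components, the walk extraction, and the final counting are routine. Hence $\diam(\R_2(G))=\diam(L(\bar G))\le n-2$.
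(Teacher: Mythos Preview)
Your argument is correct for the inequality that matters, $\diam(\R_2(G))\le n-2$, and it is exactly the approach the paper takes: a shortest (hence induced) path of length $d$ in $L(\bar G)$ unfolds into a simple path on $d+2$ distinct vertices of $\bar G$. The paper records this as the one-line fact stated just before the lemma (``any induced path on $p$ vertices in $L(G)$ corresponds to a path on $p$ edges in $G$'') and gives no further proof; you simply carry out the collision bookkeeping explicitly.

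One side remark: your parenthetical that $v_0,\dots,v_{d-1}$ is a geodesic in $H$ is true, but it only yields $\diam(\bar G)\ge d-1$, i.e.\ $d\le\diam(\bar G)+1$, not the printed middle inequality $\diam(L(\bar G))\le\diam(\bar G)-1$. In fact that middle inequality is false as stated (for $\bar G=K_3$ both diameters equal $1$), so this is a slip in the lemma's formulation rather than a gap in your proof; neither you nor the paper actually needs it to obtain $D(n,2)\le n-2$.
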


Note that the last bound is tight only when $\bar{G}$ is a path, which concludes the proof of Theorem~\ref{thm:2IS}.

Since the diameter is linear, one might wonder if we can determine in linear time if there exists such a transformation (and find it). Note that we cannot just compute the line graph of the complement and run a BFS on it. Indeed, even if a BFS can be computed in linear time with respect to the number of edges of its input, this number may be quadratic with respect to the number of edges of the original graph. However, by complementing only the graph induced by vertices of large degree, we obtain the following.

\begin{theorem}
\label{thm:2IS_linear}
Let $A,B$ be two independent sets of $G$ of size $2$. We can decide if there exists a TJ-transformation from $A$ to $B$ in time $O(|V(G)|+|E(G)|)$.
\end{theorem}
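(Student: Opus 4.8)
The plan is to reduce the problem to connectivity in $\bar G$ without ever constructing $\bar G$ (or its line graph) in full, since $\bar G$ may have $\Theta(n^2)$ edges even when $G$ is sparse. By the observations above, a TJ-transformation from $A=\{a,a'\}$ to $B=\{b,b'\}$ exists if and only if some endpoint of $A$ and some endpoint of $B$ lie in the same connected component of $\bar G$; so it suffices to compute, in linear time, which of $a,a',b,b'$ are pairwise connected in $\bar G$. We never need the whole component structure of $\bar G$, only the classes of these (at most four) vertices.

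First I would fix a threshold $t=\sqrt{m}$ (where $m=|E(G)|$) and split $V(G)$ into the set $H$ of \emph{heavy} vertices of degree $\geq t$ in $G$ and the set $L$ of \emph{light} vertices of degree $<t$. Since $\sum_v \deg_G(v)=2m$, we have $|H|\leq 2m/t=O(\sqrt m)$, so the complement of $G[H]$ has $O(m)$ edges and can be built explicitly in $O(n+m)$ time. Next I would observe that a light vertex $v$ is adjacent in $\bar G$ to all but at most $t$ vertices; in particular any two light vertices are joined by a path of length $2$ in $\bar G$ through almost any third vertex (as long as $n$ is large enough compared to $t$, which holds once $n>2t+2$, and the few small cases are handled directly). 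Hence in $\bar G$ all light vertices, together with every heavy vertex that has a light non-neighbour in $G$, collapse into a single component $C_0$; the only vertices whose $\bar G$-component could be different are heavy vertices all of whose non-neighbours are heavy, i.e. heavy vertices $u$ with $N_G(u)\supseteq L$.

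Then the algorithm is: build $\bar G[H]$ explicitly; for each heavy vertex $u$, decide in $O(\deg_G(u))$ time whether $u$ has a light non-neighbour (scan $N_G(u)$, count how many light vertices it contains, compare with $|L|$); run a BFS/union-find on $\bar G[H]$ together with one extra node representing $C_0$, adding an edge from $u$ to the $C_0$-node whenever $u$ has a light non-neighbour and also whenever $u$ has a light neighbour that itself is adjacent in $\bar G$ to $C_0$ (which, by the previous paragraph, is automatic), so effectively $u$ joins $C_0$ unless $N_G(u)\supseteq L$. Every vertex of $\{a,a',b,b'\}$ is then located: a light one lies in $C_0$, a heavy one lies in its computed class. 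Finally output ``yes'' iff $\{a,a'\}$ and $\{b,b'\}$ hit a common class. All steps are $O(n+m)$: the heavy-subgraph complement is $O(m)$ by the degree bound, and each scan of $N_G(u)$ over heavy $u$ costs $\sum_{u\in H}\deg_G(u)\leq 2m$.

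The main obstacle is the bookkeeping around heavy vertices whose non-neighbourhood lies entirely in $H$: one must argue carefully that for such a vertex the $\bar G$-component is exactly the $\bar G[H]$-component (no light vertex can ``bridge'' two heavy components, precisely because such a $u$ has no light non-neighbour at all), and conversely that every heavy vertex with even one light non-neighbour really does merge into $C_0$. Getting these adjacency/reachability claims exactly right — and checking the degenerate regime $n\leq 2t+2$ where the ``two light vertices share a common $\bar G$-neighbour'' shortcut can fail — is where the real care is needed; the rest is a routine heavy/light decomposition.
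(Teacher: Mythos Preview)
Your approach is correct and is essentially the same heavy/light decomposition as in the paper, only with a different threshold: you split at degree $\sqrt{m}$, the paper splits at degree $(n-1)/2$. In both cases the low-degree vertices collapse into a single $\bar G$-component, one builds the complement explicitly on the $O(\sqrt m)$ (resp.\ $O(m/n)$) high-degree vertices, adds one extra node for the collapsed class, and runs a BFS; both bounds give $O(m)$ edges in that auxiliary complement. The one advantage of the paper's threshold is that with $(n-1)/2$ the pigeonhole ``any two low-degree vertices share a common non-neighbour'' holds unconditionally, so there is no side case to treat, whereas your choice forces you to dispatch the regime $n\le 2\sqrt m+2$ separately (which is fine, since then $m=\Omega(n^2)$ and one can just build $\bar G$ outright, but your ``few small cases'' phrasing undersells this). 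Otherwise the arguments are interchangeable.
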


\begin{proof}
Let $G$ be an $n$-vertex $m$-edge graph, and $s,t$ two vertices of $G$. We start by precomputing the degrees of the vertices of $G$ in $O(n+m)$ time. Let $B$ be the set of vertices of degree at least $\frac{n-1}{2}$, and $S=V(G)\setminus B$. Observe that by the pigeonhole principle, any two vertices in $S$ must have a common non-neighbor, hence are connected in $\bar{G}$. 
Let us denote by $H$ the graph obtained by identifying all the vertices of $S$ into a single vertex $x$ and where we put an edge between $x$ and $y \notin S$ if $y$ is adjacent to a vertex of $S$. It is easy to check that there is a path between $s$ and $t$ in $\overline{G}$ if and only if there is such a path in $\overline{H}$ (up to replacing $s$ or $t$ by $x$ if they lie in $S$).  Note moreover that one can easily compute the graph $H$ in $O(n+m)$ time. 

One can notice that the graph $H$ might be sparse and then its complement can have size $\Omega(|V(H)|^2)$. However, observe that 
\[(|V(H)|-1)\times \frac{n-1}{2}\leqslant \sum_{v\in B} \deg_G(v) \leqslant 2m,\]
hence $|V(H)|=O(\frac{m}{n})$. In particular, one can compute $\bar{H}$ and use a BFS in $\bar{H}$ in time $O(|V(H)|^2)=O(\frac{m^2}{n^2})=O(m)$.
\end{proof}

Note that the algorithm we provide can easily be adapted to return a (possibly non-optimal) transformation when it exists.

\subsection{Almost quadratic construction for independent sets of size 3}\label{sec:3BI}

The rest of this section is devoted to prove the following result:

\threeIS*

The proof is based on two steps. First, we prove that there exists a graph whose configuration graph is the disjoint union of $n/e^{O(\sqrt{\log n})}$ paths of linear length. We then prove that, starting from a graph whose configuration graph is disconnected, we can (up to adding few vertices), obtain a graph whose configuration graph is connected and whose diameter is at least the sum of the diameter of the connected components of the initial configuration graph. 
While the first step is specific to $k=3$ and is based on the existence of almost linear subsets of integers without arithmetic sequences of length $3$, the gluing process is general and holds for any possible value of $k$. 
Let us first prove the gluing lemma.

\begin{lemma}\label{lem:joincomp}
Let $k \ge 3$.
Let $G$ be a graph on $n$ vertices whose $k$-configuration graph contains $r$ connected components $\C_1,\ldots,\C_r$ of diameter respectively $d_1,\ldots,d_r$. Then there exists a graph on at most $n+(3k-2) \cdot (r-1)$ vertices whose configuration graph has diameter at least $(4k-4)(r-1)+\sum_{i \le r} d_i$.
\end{lemma}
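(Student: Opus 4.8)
The plan is to glue the $r$ connected components into a single configuration graph by introducing, for each consecutive pair $(\C_i, \C_{i+1})$, a small ``connector gadget'' on $3k-2$ new vertices that forces any reconfiguration sequence to traverse all of $\C_i$ before it can enter $\C_{i+1}$. Concretely, I would pick a reference independent set $A_i$ in $\C_i$ and another $B_i$ in $\C_{i+1}$, attach the gadget so that the only way to move a token ``out of'' the part of the graph carrying $\C_i$ and ``into'' the part carrying $\C_{i+1}$ is to route $k-1$ of the tokens through a narrow bottleneck in the gadget — this costs $\Theta(k)$ moves, and crucially it can only be done when the remaining token sits at a specific vertex, so the sequence is forced to first bring the configuration to $A_i$ (the diameter-realizing endpoint in $\C_i$), then pay the gadget cost, then exit at $B_{i+1}$ and continue.

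The key steps, in order: (1) For each $i$, fix $P_i$ and $Q_i$, the two endpoints of a diametral path in $\C_i$. (2) Build the new graph $G'$ on $V(G) \sqcup X_1 \sqcup \dots \sqcup X_{r-1}$, where each $X_i$ has $3k-2$ vertices; make each gadget $X_i$ internally a ``path-like'' configuration gadget (along the lines of the complement-of-a-path constructions used elsewhere in the paper) whose own configuration graph is a path of length $4k-4$, and add edges between $G$ and $X_i$ so that the endpoints of that internal path coincide with ``$Q_i$-compatible'' and ``$P_{i+1}$-compatible'' configurations. (3) Verify that $\R_k(G')$ is connected: any independent set either lives entirely in one $\C_j$ (plus frozen gadget tokens), or lives in some gadget $X_i$; the added edges connect $\C_j$ to $X_{j-1}$ and $X_j$ and successive gadgets chain up, so everything is in one component. (4) Lower bound the diameter: take the independent set at endpoint $P_1$ (with all gadget tokens in their starting position) and the one at endpoint $Q_r$; any sequence between them must, for each $i$, cross gadget $X_i$, and each crossing is ``committed'' — once you start dismantling the $\C_i$-side configuration to enter $X_i$ you cannot return, so you must already be at $Q_i$ — hence the total length is at least $\sum_i d_i$ (to traverse each component) plus $(4k-4)(r-1)$ (to traverse the $r-1$ gadgets), giving the claimed bound. (5) Count vertices: $|V(G')| = n + (3k-2)(r-1)$, as required.

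The main obstacle I expect is step (3)–(4) done carefully: proving the bottleneck is genuinely forced. One has to rule out ``shortcuts'' where tokens partially enter a gadget and come back, or where the extra gadget vertices let the sequence skip part of some $\C_i$ by using gadget vertices as temporary storage. This is exactly the kind of independence-between-parts argument the introduction flags as the delicate point: I would handle it by making the gadget's attachment to $G$ use a near-complete bipartite join on all but a controlled handful of vertices, so that placing any token in the ``deep'' part of a gadget forbids all but one token from being anywhere in $V(G)$ — this makes the gadget act as a true sequential valve and makes the $\Theta(k)$ cost of each passage unavoidable. The arithmetic-progression-free set is \emph{not} needed here; it is only needed in the first step (the $r$-path construction specific to $k=3$), so this gluing lemma stays clean and $k$-generic.
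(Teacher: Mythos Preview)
Your proposal is essentially the paper's approach: for each consecutive pair of components, insert $3k-2$ new vertices forming (the complement of) a path, joined almost completely to $V(G)$ except at the ends, so that the $k$-configuration graph of the gadget is itself a path linking $B_i$ to $A_{i+1}$; then argue that any reconfiguration from $A_1$ to $B_r$ must traverse each $\C_i$ end-to-end and each gadget in full. One clarification worth making when you write it out: in the paper's construction \emph{all} $k$ tokens pass through the gadget (the ordered set $X_i$ is $B_i$, then the $3k-2$ new vertices, then $A_{i+1}$, and every $k$-independent set touching the gadget is exactly $k$ consecutive elements of $X_i$); your language about ``frozen gadget tokens'' and ``all but one token in $V(G)$'' suggests a split model that does not match and would make the $4k-4$ count harder to justify, so drop that picture and argue directly that any independent set meeting the new vertices is a window of $k$ consecutive elements of $X_i$.
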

\begin{proof}

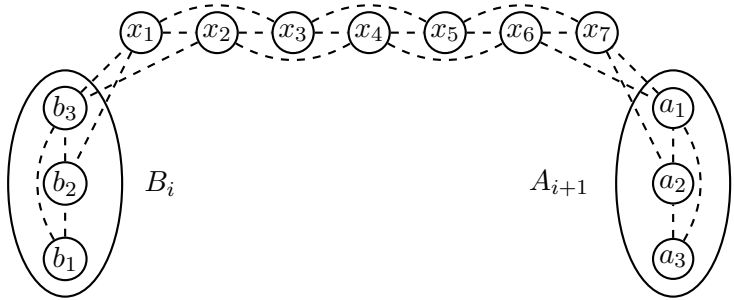
\begin{figure}[!ht]
\centering
\begin{tikzpicture}[thick,v/.style={draw,circle,inner sep =1pt}]
\node[v] (x1) at (0,0) {$x_1$};
\node[v] (x2) at (1,0) {$x_2$};
\node[v] (x3) at (2,0) {$x_3$};
\node[v] (x4) at (3,0) {$x_4$};
\node[v] (x5) at (4,0) {$x_5$};
\node[v] (x6) at (5,0) {$x_6$};
\node[v] (x7) at (6,0) {$x_7$};
\draw[dashed] (x1) -- (x2) -- (x3) -- (x4) -- (x5) -- (x6) -- (x7);
\draw[bend left,dashed] (x1) to (x3);
\draw[bend left,dashed] (x3) to (x5);
\draw[bend left,dashed] (x5) to (x7);
\draw[bend right,dashed] (x2) to (x4);
\draw[bend right,dashed] (x4) to (x6);

\node[v] (b1) at (-1,-3) {$b_1$};
\node[v] (b2) at (-1,-2) {$b_2$};
\node[v] (b3) at (-1,-1) {$b_3$};
\draw[dashed] (b1) -- (b2) -- (b3);
\draw[bend left,dashed] (b1) to (b3);
\draw[dashed] (x2) -- (b3) -- (x1) -- (b2);
\draw (b2) ellipse (.75cm and 1.5cm);
\node at (.25,-2) {$B_i$};

\node[v] (b1) at (7,-3) {$a_3$};
\node[v] (b2) at (7,-2) {$a_2$};
\node[v] (b3) at (7,-1) {$a_1$};
\draw[dashed] (b1) -- (b2) -- (b3);
\draw[bend right,dashed] (b1) to (b3);
\draw[dashed] (x6) -- (b3) -- (x7) -- (b2);
\draw (b2) ellipse (.75cm and 1.5cm);
\node at (5.5,-2) {$A_{i+1}$};
\end{tikzpicture}
\caption{Construction for Lemma~\ref{lem:joincomp} with $k=3$, where only non-edges are drawn.}
\label{fig:joincomp}
\end{figure}

For every $i \le r$, let us denote by $A_i$ and $B_i$ two independent sets at distance $d_i$ in the component $\C_i$ of the $k$-configuration graph. For every $i$, we denote by $a_j^i$ (resp. $b_j^i$) the $j$-th vertex of $A_i$ (resp. $B_i$). We moreover assume that $a_k^i$ and $b_1^i$ are respectively the first and last vertices modified in a shortest sequence $P_i$ from $A_i$ to $B_i$. Note that the sets $(A_1,\ldots,A_r,B_1,\ldots,B_r)$ might intersect.

For every $i \le r-1$, we create $3k-2$ new vertices $x_1^i,\ldots,x_{3k-2}^i$ in order to connect $B_i$ to $A_{i+1}$ in the configuration graph of the new graph (see Figure~\ref{fig:joincomp} for an illustration of the construction). We first add all the edges between the new vertices and $V(G)$ and, for every $i \ne j$, the vertices $x_a^i$ and $x_b^j$ are adjacent regardless of $a,b$. 
Moreover, for every $p < q \le 3k-2$, $x_p^i$ is adjacent to $x_q^i$ if and only if $q-p > k-1$.
We finally remove the following edges: for every $j$ we remove the edges between $x_j^i$ (resp. $x_{3k-2-j}$) and $b_{j'}^i$ (resp. $a_{k-j'}^{i+1}$) with $j' > j$. Let us denote by $H$ the resulting graph.

Let $X_i$ be the (ordered) set of vertices $\{b_1^i,\ldots,b_k^i,x_1^i,\ldots,x_{3k-2}^i,a_1^{i+1},\ldots,a_k^{i+1}\}$.
Let us first prove the following simple claim on the structure of independent sets:
\begin{claim}\label{clm:inter}
Let $i \le r-2$. Every $k$-independent set $S$ containing one vertex in $\{x_1^i,\ldots,x_{3k-2}^i \}$:
\begin{itemize}
    \item consists of $k$ consecutive vertices of $X_i$ and,
    \item has degree $2$ in $\R_k(H)$ if it contains a vertex in $\{x_2^i,\ldots,x_{3k-3}^i\}$.
\end{itemize}
\end{claim}
\begin{proof}
Let us first prove that if an independent set $S$ contains a vertex in $\{ x_1^i,\ldots,x_{3k-2}^i \}$ then it contains consecutive vertices of $X_i$.

If $S\subseteq \{x_1^i,\ldots,x_{3k-2}^i\}$, then let us denote by $x_a^i$ and $x_b^i$ the first and last vertices in $S$. By construction, since $S$ is an independent set, we must have $b-a \le k-1$. And then $S$ contains only non-neighbors of $x_a^i$ and $x_b^i$, hence $b-a=k-1$ and $S=\{x_a^i,\ldots,x_b^i\}$. So from now on we can assume that $S$ contains a vertex of $V(G)$.

Since $x_{k-1}^i,\ldots,x_{2k-2}^i$ are complete to $G$, the set $S$ cannot contain one of these vertices. And since $\{x_1^i,\ldots,x_{k-2}^i\}$ is complete to $\{x_{2k-1}^i,\ldots,x_{3k-2}^i\}$, we can assume by symmetry that $S$ contains vertices in $\{x_1^i,\ldots,x_{k-2}^i\}$ but not in $\{x_{2k-1}^i,\ldots,x_{3k-2}^i\}$. Let us denote by $a\leqslant k-1$ the largest index such that $x_a^i\in S$. By construction, $x_a^i$ is non-adjacent to the $k-1$ vertices before it in the sequence and complete to all the other vertices of $H \setminus X_i$. So $S=\{b_{a+1}^i,\ldots,b_k^i,x_1^i,\ldots,x_a^i\}$. 

For the second item, observe that indeed, each set of $k$ consecutive vertices in $X_i$ is independent, and is connected to the independent sets corresponding to the $k$ vertices just before and after them in the ordering. Moreover, each independent set $S$ intersecting $\{x_2^i,\ldots,x_{3k-3}^i\}$ contains at least two vertices in $\{x_1^i,\ldots,x_{3k-2}^i\}$. Therefore $S$ is only adjacent to independent sets containing at least one vertex in $\{x_1^i,\ldots,x_{3k-2}^i\}$. By the first item, they consist of $k$ consecutive vertices of $X_i$, hence $S$ has exactly two neighbors in $\R_k(H)$. 
\end{proof}

So the $k$-configuration graph of $H$ restricted to $X_i$ induces a path $\P_i$ from $B_i$ to $A_{i+1}$ of length $4k-2$.  
By concatenating these paths with shortest reconfiguration sequences from the $A_i$ to $B_i$ for every $i \le r$, we get a reconfiguration sequence $\P$ from $A_1$ to $B_r$ of length $(4k-2)(r-1)+\sum_{i \le r} d_i$.

To complete the proof we have to prove that we can shorten the sequence $\P$ by exactly $2(r-1)$ steps (and that no shorter transformation exists). Indeed, for every $i \le r-1$, consider a reconfiguration sequence from $A_i$ to $B_i$ of minimum size where $a_1^i$ is the vertex deleted from $A_i$ at the beginning of the sequence and $b_1^i$ is the last vertex to be moved on $B_i$ at the end of the sequence (this reconfiguration sequence exists by assumption). If we denote by $B_i'$ the independent set before $B_i$, $B_i'$ contains $B_i \setminus \{ b_1^i \}$. Then $B_i'$ is adjacent to $(B_i \cup x_1^i) \setminus b_1^i$ in the configuration graph and then we can remove $B_i$ in the reconfiguration sequence from $A_1$ to $B_r$ and still have a reconfiguration sequence. Similarly, we can find a shortcut of the sequence on $A_i$ for every $2 \le i \le r$. So we can shorten $\P$ by $2(r-1)$ steps.

We claim that this transformation has shortest length. Let us briefly argue why it is true. Consider an independent set of $H$ containing exactly one vertex in $\{x_j^i\mid i \le r-1, j \le 3k-2\}$. It should contain the vertex $x_1^i$ or $x_{3k-2}^i$ for some $i$ by Claim~\ref{clm:inter} and $k-1$ vertices of an independent set in $\C_i$ or in $\C_{i+1}$. Thus it can only be adjacent to an independent set of the component of $\C_i$ or $\C_{i+1}$ or an independent set containing two vertices of $X_i$ by Claim~\ref{clm:inter}. 

Using Claim~\ref{clm:inter} again, it means that from an independent set only containing $x_1^i$ (resp. $x_{3k-3}^i$) we can only reach an independent set of $\C_{i+1}$ (resp. $\C_i$) containing $B_{i-1} \setminus b_1^{i-1}$ (resp. $A_{i+1} \setminus a_k^{i+1}$).\end{proof}

Before proving that it is possible to obtain an almost linear number of components of almost linear size, we need some definitions and results of group theory.

Let $S$ be a set of integers. We say that $S$ is \emph{$3$-AP-free} if it does not contain an arithmetic progression of length $3$, i.e. there does not exist $s_1<s_2<s_3$ in $S$ such that $s_2-s_1 = s_3-s_2$. Determining the size of the largest possible $3$-AP-free subset of $[1,n]$ is a heavily studied problem whose exact answer is not known. It was shown that there does not exist any $3$-AP-free set of positive density in $\mathbb{N}$ \cite{Klaus52}. However Behnrend proved in~\cite{behrend1946sets} that there exist $3$-AP-free subsets of $\{1,\ldots,n \}$ of size $n/e^{O(\sqrt{\log n})}$. Note that if $S$ is $3$-AP-free, then the set $4S+1$ also is $3$-AP-free.  So, there exists a $3$-AP-free sequence of size $n/e^{O(\sqrt{\log n})}$ only containing integers whose value is $1$ modulo $4$. Such a set will be called an \emph{odd $3$-AP-free} sequence in the rest of the paper. 

Now let $p$ be a prime number. It is well known that, for every $\alpha \in \{1,\ldots,p-1\}$, the sequence of the $k\alpha$ modulo $p$ is a periodic sequence of period $p$. That is $\{0,\alpha,\ldots,(p-1)\alpha\} = \{0, \ldots, p-1\}$ modulo $p$.

We claim that the following holds:

\begin{lemma}\label{lem:manycomp}
Let $n$ be a prime number. Let $S$ be an odd $3$-AP-free sequence where the maximum integer is at most $n/8$. Then, there exists a graph on $n-1$ vertices whose configuration graph is the disjoint union of $|S|$ paths of length $n-3$.
\end{lemma}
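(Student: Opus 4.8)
The idea is to start from the complement of a path (or a cycle) on $n-1$ vertices, whose $3$-configuration graph we understand well, and then delete a carefully chosen sparse set of edges so that the configuration graph breaks into exactly $|S|$ pieces, each still a path of length $n-3$. Concretely, I would index the $n-1$ vertices by $\mathbb{Z}/(n-1)$ or by $\{1,\dots,n-1\}$ and take $G$ to be the graph in which the non-edges are exactly the pairs $\{i,j\}$ with $j-i \in \{\pm 1\} \cup (\pm S \bmod n)$ — i.e. the "Cayley-like" graph whose complement is a union of circulant matchings at the distances prescribed by $S$ together with the Hamilton cycle. An independent set of size $3$ of $G$ is then a triple of vertices that is pairwise a non-edge, i.e. whose pairwise differences all lie in $\{\pm1\}\cup(\pm S)$; since $S$ is $3$-AP-free and its elements are $1 \bmod 4$ (hence all differences of distinct elements are nonzero and no element is the sum of two others in a degenerate way), the only way three differences $d_1,d_2,d_3=d_1+d_2$ can all be admissible is the "path case" where, say, $d_1=d_2=1$ and $d_3=2\notin S$ is killed — so the admissible triples are forced to have a very rigid shape: two of the three differences equal $1$, and consequently the independent sets of size $3$ are exactly the sets $\{t,t+1,t+1+s\}$ for $s\in S$ (up to reflection and translation). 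This is where the $3$-AP-freeness and the congruence condition do the real work: they guarantee no "unexpected" triangle in $G^c$ creates extra independent sets or extra adjacencies between the intended paths.

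With the independent sets classified, the reconfiguration structure follows. For a fixed $s\in S$, the independent sets using that "jump" of size $s$ form a sequence $\{t,t+1,t+1+s\}$ as $t$ ranges over the appropriate interval of $\{1,\dots,n-1\}$ (the odd/offset condition and the bound $\max S \le n/8$ ensuring these stay in range and do not wrap badly), and consecutive ones differ in exactly one token — sliding $t\mapsto t+1$, then $t+1\mapsto t+2$, etc. — so they form a path. Its length is $n-3$ because $t$ runs through roughly $n-1$ values minus the boundary losses. The key separation claim is: an independent set of $G$ involving $s$ is never adjacent (in $\R_3(G)$) to one involving $s'\neq s$; this again reduces to showing that no size-$3$ independent set can be reached from $\{t,t+1,t+1+s\}$ by a single token jump except those of the same family, which is exactly the rigidity established above (any neighbour shares two vertices, and two shared vertices plus the admissibility constraints pin down the third vertex to again give a difference $1$ and the same $s$). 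Finally I would check the count: the value $1 \bmod 4$ trick (from $4S+1$) makes the families genuinely disjoint as vertex sets of $\R_3(G)$, giving exactly $|S|$ components.

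The main obstacle I anticipate is the case analysis proving the rigidity statement — that the only admissible difference-triples $(d_1,d_2,d_1+d_2)$ with entries in $\{\pm1\}\cup(\pm S)$ are the "two ones" ones — and, relatedly, handling the boundary vertices and the wrap-around so that each family is a path of the stated length $n-3$ rather than a cycle or a shorter path, while still being pairwise non-adjacent and exhausting all size-$3$ independent sets. The hypotheses ($n$ prime, $S$ odd $3$-AP-free, $\max S\le n/8$) are presumably tuned precisely to make this case analysis clean: primality lets one invert residues freely when ruling out stray coincidences, the factor-$8$ slack keeps the $+1$ sliding and the $+s$ jump from colliding, and the $1\bmod 4$ condition both preserves $3$-AP-freeness and forces distinctness of the component vertex sets. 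I would organize the proof as: (1) define $G$; (2) prove the independent-set classification lemma; (3) deduce that $\R_3(G)$ restricted to each $s$-family is a path of length $n-3$; (4) prove no edges between families; (5) conclude there are exactly $|S|$ such components and nothing else.
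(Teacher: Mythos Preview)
Your proposed construction does not work: in the graph whose non-edges are the pairs at distance in $\{\pm 1\}\cup\pm S$, the sets $\{t,t+1,t+1+s\}$ are \emph{not} independent. The pair $\{t,\,t+1+s\}$ has difference $s+1\equiv 2\pmod 4$, which lies neither in $\{\pm 1\}$ nor in $\pm S$ (all elements of $S$ being $1\bmod 4$), so $t$ and $t+1+s$ are adjacent. In fact your own case analysis shows the construction has essentially no size-$3$ independent sets: if $d_1,d_2\in\{1\}\cup S$ then $d_1+d_2\in\{2\}\cup(1+S)\cup(S+S)$, and all of these are $\equiv 0$ or $2\pmod 4$, hence never admissible. (You seem to notice this when you write ``$d_3=2\notin S$ is killed'' and then, inconsistently, still assert that the admissible triples have two differences equal to $1$.) So the plan collapses at step (2): there is nothing to classify.

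The paper's construction is different in a crucial way. It works on $n$ vertices modulo the prime $n$ and takes the non-edge distances to be $S\cup 2S$ (not $\{1\}\cup S$). The size-$3$ independent sets are then the arithmetic progressions $\{a,a+s,a+2s\}$, whose three differences $s,s,2s$ are visibly admissible. Rigidity comes straight from $3$-AP-freeness: if $b-a=s$, $c-b=s'$ with $s,s'\in S$, then $c-a=s+s'$ is even, hence must lie in $2S$, say $c-a=2s''$; now $s+s'=2s''$ forces $s=s'=s''$. The $1\bmod 4$ condition handles the remaining parity cases and the separation of families (distinct $s,s'\in S$ give $\{\pm s,\pm 2s\}\cap\{\pm s',\pm 2s'\}=\varnothing$ since $s,s'\le n/8$ and $2s\ne s'$ by parity). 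Finally, each family is a cycle of length $n$ in $\R_3$; deleting the single vertex $0$ from $G$ removes three consecutive independent sets from every cycle and turns it into a path on $n-3$ vertices, yielding the $n-1$-vertex graph claimed. Your outline (classify, then separate, then count) is fine, but you need the $S\cup 2S$ circulant and the arithmetic-progression triples to make it go through.
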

\begin{proof}
Let $G$ be the graph obtained from a clique on $n$ vertices by removing the edges $(i,i+s)$ and $(i,i+2s)$ for $s \in S$ and $i \le n$, where all integers are understood modulo $n$. 

We claim that $\R_3(G)$ consists of $|S|$ induced cycles of length $n$. First observe that for each $s\in S$ we have $n$ independent sets, namely $\{ks,(k+1)s,(k+2)s\}$ ($0\leqslant k <n$), which induce a cycle. In particular, $\R_3(G)$ contains $|S|$ cycles of length $n$.

Let us now show that this union of cycles is induced. To this end, let $I=\{a,b,c\}$ and $I'=\{a',b',c'\}$ be two independent sets such that $c-b = b-a = s \in S$ and $c'-b'=b'-a'=s' \in S$ with $s\neq s'$. If $I\cap I'$ contains two elements $x$ and $y$, then observe that $x-y\in\{\pm s,\pm 2s\}\cap\{\pm s',\pm 2s'\}$, which is impossible since $s$ and $s'$ are distinct integers below $n/8$ and are both $1$ mod $4$. Therefore $I$ and $I'$ are not adjacent.

Finally, we prove that $\R_3(G)$ has no other vertex. Let $\{a, b, c\}$ be an independent set of $G$. If $b-a \in S$ and $c-b \in S$, then $c-a$ is even and lies in $2S$, hence we can write $b-a=s, c-b=s'$ and $c-a=2s''=s'-s$.
Since $S$ is $3$-AP-free, we have $s=s'=s''$ and then $b-a = c-b$ (and then $\{a,b,c\}$ is one of the sets described above). 
Otherwise, $b-a$ or $c-b$ does not belong to $S$, thus $c-a$ is equal to $0$ or $3$ modulo $4$ and then $c-a \notin S \cup 2S$, which is a contradiction. 

So $\R_3(G)$ is the disjoint union of cycles with no edges between them. Now if we remove the vertex $0$ from $G$, each of these cycles now becomes a path (we remove the three independent sets containing it for every $S$), which completes the proof.
\end{proof}

Let us combine Lemma~\ref{lem:joincomp} and~\ref{lem:manycomp} to prove Theorem~\ref{thm:3IS}.
Let $S$ be an odd $3$-AP-free sequence of size $n/e^{O(\sqrt{\log n})}$. By Lemma~\ref{lem:joincomp}, there exists a graph $G$ on $n-1$ vertices whose $3$-configuration graph admits $|S|$ connected components of diameter $n-3$. By applying Lemma~\ref{lem:manycomp} to $G$, we obtain a new graph on $n - 1 + 7 \cdot (|S| - 1) \le 8n$ vertices and whose $3$-configuration graph has diameter at least $8\cdot (|S|-1) + |S|\cdot (n-3) = n^2/e^{O(\sqrt{\log n})}$, which completes the proof.

We end this section with the following question which would extend Theorem~\ref{thm:2IS_linear} to independent sets of size $3$.

\begin{question}
Can we compute the diameter or the existence of a transformation between two independent sets of size $3$ in (sub)quadratic time?
\end{question}

\subsection{General lower bound}

The goal of this section is to generalize the construction of Section~\ref{sec:3BI} to larger values of $k$. Unfortunately, we were not able to obtain a lower bound that almost fits the upper bound for larger values of $k$, but we still obtain the following. 

\generic*

Let us first give the flavour of the proof with an intermediate construction.

\begin{lemma}\label{lem:LB_with_2IS}
Let $G$ be a graph with independence number $k$ such that $\R_k(G)$ has diameter $d$. Then for every integer $n$, there exists a graph $H$ on $|V(G)|+6n+2$ vertices such that $\R_{k+2}(H)$ has diameter at least $2dn$.
\end{lemma}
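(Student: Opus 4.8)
The plan is to build $H$ from $G$ by attaching a "blow-up" of a complement-of-path gadget on $6n+2$ new vertices, so that an independent set of size $k+2$ in $H$ has the shape (independent set of size $k$ in $G$) $\cup$ (independent set of size $2$ in the gadget). The gadget should be designed so that its own $2$-configuration graph is a path of length about $2n$ (as in Theorem~\ref{thm:2IS}, the complement of a path on $\approx 2n$ vertices has $2$-configuration graph a path of length $\approx 2n-2$), but, crucially, its "endpoints" come in a form that lets us cycle through all $d$ steps of $\R_k(G)$ and then advance the gadget by one notch, repeat. Concretely, I would take the $6n+2$ new vertices to be complete to $V(G)$ except along a carefully chosen set of non-edges, so that whenever the gadget is in an "interior" state its two tokens are forced to stay in the gadget (giving the path structure, exactly as in Claim~\ref{clm:inter}), but at the two extreme gadget-states one of the two tokens is freed to sit on a vertex of $G$, which is what allows the $G$-part to be reconfigured.

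The key steps, in order: (1) Describe the new vertex set as $2n+1$ "columns" of $3$ vertices (or a similar grouping totalling $6n+2$), wire up the non-edges inside the gadget so that $\overline{(\text{gadget})}$ behaves like a path, and wire the gadget–$G$ non-edges so that only the two last columns on each side have non-neighbours in $V(G)$. (2) Prove the structural claim: every $(k+2)$-independent set of $H$ either lies entirely in the gadget and consists of consecutive "gadget positions" (so has degree $2$ in $\R_{k+2}(H)$ when it is interior — the analogue of Claim~\ref{clm:inter}), or it is (a $k$-independent set of $G$) $\cup$ (a $2$-independent set using one of the two free end-columns of the gadget). (3) Build the long reconfiguration sequence: put the gadget in its leftmost state and run a shortest $A_1 \to B_1$ transformation of length $d$ in $\R_k(G)$ while the gadget is "parked" at the left end; then slide the gadget one notch; then run a shortest $B_1 \to A_1$ transformation; slide again; and so on, alternating, for $n$ full passes — giving length $\ge 2dn$ (the $n$ single-notch gadget moves are lower-order). (4) Argue optimality / that the diameter really is this large, by showing (again via the structural claim) that any transformation between the two extreme states must traverse every interior gadget position, and between two consecutive "parking" events the $G$-part must be fully reconfigured, so no shortcut across the gadget exists — the $2$-configuration-graph-is-an-induced-path phenomenon is exactly what prevents shortcutting.

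I expect the main obstacle to be step (2)–(4): getting the non-edge pattern between the gadget and $V(G)$ exactly right so that (i) in interior gadget states both tokens are genuinely trapped in the gadget (no token may escape to $G$, which would let the path be short-circuited, cf. the role of the vertices complete to $G$ in Lemma~\ref{lem:joincomp}), while (ii) at the two extreme states precisely one token is freed and moreover the freed configuration is compatible with an arbitrary $k$-independent set of $G$, so that the $G$-reconfiguration can proceed unobstructed; and then verifying that these two requirements do not conflict and that the resulting $\R_{k+2}(H)$ has no "diagonal" edges allowing one to reconfigure the gadget and $G$ simultaneously in a way that saves steps. Once the gadget is correctly specified, the counting ($6n+2$ new vertices, diameter $\ge 2dn$) and the concatenation argument are routine, mirroring the proof of Lemma~\ref{lem:joincomp}, and the full Theorem~\ref{thm:secondbound} then follows by iterating this $k \mapsto k+2$ step $\lfloor k/3\rfloor$ times (or an analogous $k\mapsto k+3$ step) starting from the $k=3$ construction of Theorem~\ref{thm:3IS}.
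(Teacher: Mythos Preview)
Your proposal contains a genuine conceptual gap: you have the direction of the gadget--$G$ adjacencies reversed, and as a consequence the mechanism you describe cannot produce the claimed diameter.

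You propose to make the $6n+2$ new vertices \emph{complete to $V(G)$ except along a few non-edges at the ends}. But then any interior gadget vertex is complete to $V(G)$, so a $(k+2)$-independent set containing such a vertex cannot contain any vertex of $G$ at all; it would have to lie entirely inside the gadget. Since the gadget is (essentially) the complement of a path, its independence number is $2$, so for $k\geq 2$ there are no such sets. Your step~(2) claim that some $(k+2)$-independent sets ``lie entirely in the gadget'' is therefore vacuous, and the whole path you want to build in step~(3) collapses. Relatedly, your picture of a token being ``freed to sit on a vertex of $G$'' at the extreme states suggests tokens migrating between the gadget and $G$; that is not what happens, and it is not what is needed.

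The paper's construction does the opposite. The gadget $X=\{x_1,\dots,x_{6n+2}\}$ is the complement of a path, and \emph{most} gadget vertices have \emph{no} edges to $V(G)$; only the ``toll booth'' vertices $x_{6\ell-3}$ and $x_{6\ell}$ (for $1\le\ell\le n$) are made adjacent to $V(G)\setminus B$ and $V(G)\setminus A$ respectively. Since $G$ has independence number $k$ and $X$ has independence number $2$, every $(k+2)$-independent set of $H$ splits as (a $k$-independent set of $G$) $\cup$ (two consecutive $x_i$'s). The two gadget tokens never leave $X$; their projection to $\R_2(X)$ is forced to walk along the path $\{x_1,x_2\},\{x_2,x_3\},\dots$, hitting every pair $\{x_i,x_{i+1}\}$. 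Whenever this pair contains a toll booth $x_{6\ell-3}$ (resp.\ $x_{6\ell}$), the $G$-part is forced to equal $B$ (resp.\ $A$). Since the toll booths alternate $B,A,B,A,\dots$ and there are $2n$ of them, any transformation from $A\cup\{x_1,x_2\}$ to $A\cup\{x_{6n+1},x_{6n+2}\}$ must reconfigure the $G$-part between $A$ and $B$ at least $2n$ times, each costing $d$ steps. The crucial point you are missing is that the constraints on the $G$-part are imposed \emph{throughout} the gadget (at $2n$ positions), not only at its two ends; with constraints only at the ends you would get diameter $O(d+n)$, not $\Omega(dn)$.
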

\begin{proof}
Let $A$ and $B$ be two independent sets of size $k$ at distance $d$ in $\R_k(G)$. 

\begin{figure}[!ht]
    \centering
    \includegraphics[scale=0.65]{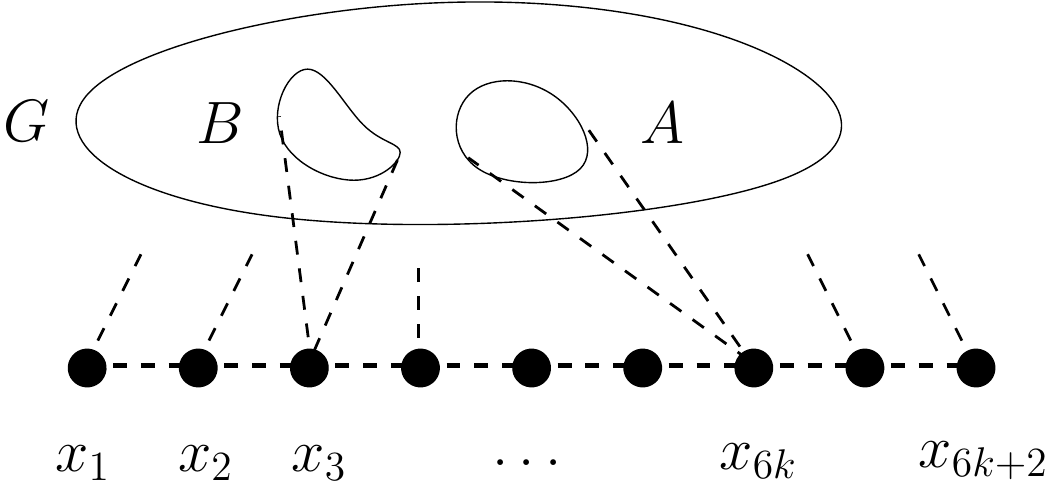}
    \caption{Construction of the proof of Lemma~\ref{lem:LB_with_2IS}. For readability, we have only represented the non-edges incident with the vertices of $X$.}
    \label{fig:BIvia2IS}
\end{figure}

Let us create $6n+2$ vertices $X=\{ x_1, x_2, \ldots, x_{6n+2} \}$, inducing the complement of a path. For every $1 \le \ell \le n$, link the vertices $x_{6\ell-3}$ and $x_{6\ell}$ to respectively $V(G) \setminus B$ and $V(G)\setminus A$. Let us denote by $H$ the resulting graph (see Figure~\ref{fig:BIvia2IS} for an illustration). 
Let $C, D$ be the independent sets $A \cup \{ x_1,x_2\}$ and $A \cup \{x_{6n+1},x_{6n+2} \}$. Since $k$ is the independence number of $G$ and $X$ is the complement of a path, the maximum independent sets of $H$ have size $k+2$ and all of them contain $k$ vertices in $V(G)$ and two vertices in $X$.

In particular, in every transformation from $C$ to $D$, two tokens are present at each time in $X$, and the movements of these tokens correspond to a path in $\R_2(X)$. Therefore, in order to slide the tokens from $\{x_1,x_2\}$ to $\{x_{6n+1},x_{6n+2}\}$, each transformation must hit in order some independent sets $S_1,\ldots,S_{6n+1}$ such that $S_i\cap X=\{x_i,x_{i+1}\}$. Amongst all such independent sets, we select $S_i$ as the first such independent set. Note that for every $i = 3 \mod 6$ (resp. $0 \mod 6$), $S_i= B \cup \{x_i,x_{i+1} \}$ (resp. $S_i= A \cup \{x_i,x_{i+1} \} $). So, for every $i =0$ or $3 \mod 6$, in order to transform $S_i$ into $S_{i+3}$ we need at least $d+3$ steps (since we need to transform $A$ into $B$ plus three token slides on $X$).

Therefore, the length of the reconfiguration sequence from $C$ to $D$ is at least $2n(d+3)$, which completes the proof.
\end{proof}

Since the graph $H$ constructed in Lemma~\ref{lem:LB_with_2IS} has maximum independent sets of size $k+2$, we can iterate the process starting from the complement of a path and prove that the following holds:

\begin{corollary}\label{coro:k/2}
For any $k$, $D(n,k)=\Omega_k(n^{\lfloor k/2 \rfloor})$.
\end{corollary}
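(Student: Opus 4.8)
The plan is to iterate Lemma~\ref{lem:LB_with_2IS}. Each application of that lemma increases the size of the independent sets by $2$, multiplies the diameter by a linear factor, and adds only linearly many vertices; so starting from a base graph with independence number $2$ or $3$ (depending on the parity of $k$) whose configuration graph already has linear diameter, after $\lfloor k/2\rfloor-1$ applications we will reach independent sets of size $k$ with diameter of order $n^{\lfloor k/2\rfloor}$. The base cases: if $k$ is even I would take $G_0=\overline{P_m}$, the complement of a path on $m$ vertices, whose independence number is $2$ (a clique of $P_m$ has at most two vertices) and for which $\R_2(G_0)=L(P_m)$ is a path on $m-1$ vertices, so $\diam(\R_2(G_0))=m-2$ by Theorem~\ref{thm:2IS}. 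If $k$ is odd I would take $G_0=\overline{P_m}$ together with a single isolated vertex $v$: then $G_0$ has independence number $3$, and since $P_m$ is triangle-free the token on $v$ can never move inside a size-$3$ independent set, so $\R_3(G_0)\cong\R_2(\overline{P_m})$ and again has diameter $m-2$. (Equivalently, in the odd case one may simply append an isolated vertex to the even-case construction for $k-1$, which has independence number exactly $k-1$.)

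With the base fixed, set $k_0\in\{2,3\}$ and $d_0=m-2$ accordingly and put $j=\lfloor k/2\rfloor-1$, so that $k_0+2j=k$. The recursion is: given $G_i$ with independence number $k_0+2i$ and $\diam(\R_{k_0+2i}(G_i))=d_i$, apply Lemma~\ref{lem:LB_with_2IS} with a parameter $n_i$ to obtain $G_{i+1}$ on $|V(G_i)|+6n_i+2$ vertices with independence number $k_0+2(i+1)$ and $\diam(\R_{k_0+2(i+1)}(G_{i+1}))\geqslant 2d_in_i$. The single point that makes this recursion legitimate is that the graph produced by Lemma~\ref{lem:LB_with_2IS} again has \emph{all} its maximum independent sets of size exactly $k_0+2(i+1)$ (this is exactly what the proof of that lemma records), so its hypotheses are satisfied at the next step. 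Unrolling gives a graph $G_j$ with independence number $k$, at most $m+6\sum_{i<j}n_i+2j$ vertices, and $\diam(\R_k(G_j))\geqslant 2^j d_0\prod_{i<j}n_i$.

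Finally I would balance the parameters: given a target vertex count $N$, choose $m=\Theta(N)$ and each $n_i=\Theta(N)$, with the implied constants depending only on $k$ (there are only $\lfloor k/2\rfloor$ blocks). Then $G_j$ has $\Theta_k(N)$ vertices while $\diam(\R_k(G_j))=\Omega_k\!\left(N^{j+1}\right)=\Omega_k\!\left(N^{\lfloor k/2\rfloor}\right)$, which is the claimed bound. I do not expect a genuine obstacle here: the only things requiring care are the verification that Lemma~\ref{lem:LB_with_2IS} preserves the property "independence number equals the token-set size" (so that the iteration does not break), and the two elementary base-case diameter computations, both of which follow from results already established.
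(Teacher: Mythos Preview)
Your proposal is correct and follows essentially the same approach as the paper: iterate Lemma~\ref{lem:LB_with_2IS} starting from the complement of a path, using that the output of the lemma again has independence number exactly $k+2$ so the iteration is legitimate. Your treatment is in fact more careful than the paper's one-line justification, in particular your handling of the odd-$k$ base case (adding an isolated vertex) fills a detail the paper leaves implicit.
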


In the proof of Lemma~\ref{lem:LB_with_2IS}, we can see the vertices of indices $0$ modulo $3$ as toll booths which enforces us to perform a lot of modifications in $G$ in order to pass though these vertices.
To improve the bound of Corollary~\ref{coro:k/2}, we will generalize Lemma~\ref{lem:LB_with_2IS}. Indeed instead of gluing the complement of a path to $G$ and increasing the size of the independent sets by $2$, we will copy a (slightly modified) copy of the graph of Theorem~\ref{thm:3IS} and increase the size of the independent sets by $3$. Note that we cannot automatically, when we have a graph with a large reconfiguration diameter, glue it with another graph and get a large reconfiguration diameter since we need to be careful on where we put the toll booths. (That is why the graph of Theorem~\ref{thm:3IS} has to be slightly modified to work in our setting.) The main ingredient for Theorem~\ref{thm:secondbound} is thus the following analogue of Lemma~\ref{lem:LB_with_2IS}.

\begin{lemma}\label{lem:LB_with_3IS}
Let $n,k$ be two integers.
Let $G$ be a graph with maximum independent sets of size $k$ such that the $k$-configuration graph of $G$ has diameter $d$. Then there exists a graph on at most $|V(G)|+3kn$ vertices whose $(k+3)$-reconfiguration diameter is at least $\frac{dn^2}{e^{O(\sqrt{\log n})}}$.
\end{lemma}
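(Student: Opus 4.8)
\textbf{Proof idea for Lemma~\ref{lem:LB_with_3IS}.}
The plan is to follow the blueprint of Lemma~\ref{lem:LB_with_2IS}: glue to $G$ a gadget whose own configuration graph is one long path, so that maximum independent sets of the resulting graph $H$ are the $k$-independent sets of $G$ ``times'' the vertices of that path, and plant \emph{toll booths} along the path --- positions where the $G$-part is pinned to one of two fixed independent sets $A,B$ of $G$ at distance $d$ --- so that every alternation between a type-$A$ and a type-$B$ toll booth forces a detour of length $\ge d$ in $G$. Where Lemma~\ref{lem:LB_with_2IS} uses the complement of a path (whose $\R_2$ is a path of linear length, hence $\Theta(n)$ toll booths and a gain $\Theta(n)$), I would use a modified version of the graph built for Theorem~\ref{thm:3IS}, whose $\R_3$ is a path $\P$ of length $L=n^2/e^{O(\sqrt{\log n})}$, and aim to plant $\Theta(L)$ toll booths along $\P$, for an overall gain $\Theta(L)$, giving $(k+3)$-reconfiguration diameter $\ge \Omega(Ld)=dn^2/e^{O(\sqrt{\log n})}$.

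The obstruction --- and the reason the graph of Theorem~\ref{thm:3IS} must be modified --- is that there each gadget vertex lies on $\approx |S|$ of the independent sets forming $\P$, spread over many far-apart ``segments'' (one per $s$ in the odd $3$-AP-free set $S$) in which it is visited at different positions; pinning a gadget vertex towards $A$ would affect all these occurrences at once, and a position of $\P$ meeting both an $A$-pinned and a $B$-pinned vertex would have \emph{no} maximum-independent-set extension in $H$, disconnecting $\R_{k+3}(H)$. To make the colouring globally consistent I would build the gadget on $\mathbb{Z}/m\times\mathbb{Z}/8$ for a prime $m=\Theta(n)$: starting from a clique, for each $s\in S$ with $\max S\le m/8$ remove exactly the pairs at difference $(\pm s,\pm1)$ and $(\pm 2s,\pm2)$, so that each triple $\{(x,c),(x+s,c+1),(x+2s,c+2)\}$ becomes independent. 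Re-running the argument of Lemma~\ref{lem:manycomp} (the parity trick now applied to the first coordinate) should show $\R_3$ of this gadget is exactly the disjoint union of $|S|$ cycles of length $8m$, with independence number $3$; the crucial new feature is that moving along \emph{any} cycle advances the $\mathbb{Z}/8$-coordinate by $+1$ per step, so two vertices on a common independent triple always have $\mathbb{Z}/8$-coordinates at distance $\le 2$, uniformly in $s$. Colouring $(x,c)$ with type $A$ if $c=0$, type $B$ if $c=4$, and ``free'' otherwise is therefore automatically consistent, and since $0$ and $4$ are at distance $4>2$ in $\mathbb{Z}/8$ no independent triple meets both types; along each cycle the toll pattern is the periodic word $A,\mathrm{f},B,B,B,\mathrm{f},A,A$ of period $8$, giving $\Theta(m)$ alternations per cycle and a free position between each $A$-toll booth and the next $B$-toll booth. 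Deleting the vertex $(0,0)$ turns each cycle into a path whose two endpoints are free, and applying Lemma~\ref{lem:joincomp} (with $k=3$, its $O(|S|)$ fresh vertices declared free) glues the $|S|$ paths into one induced path $\P$ of length $\Theta(m|S|)=n^2/e^{O(\sqrt{\log n})}$; the whole gadget uses $8m+O(|S|)\le 3kn$ vertices.

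The graph $H$ is then $G$ together with this gadget, with all edges between the two parts \emph{except} that a type-$A$ (resp. type-$B$) gadget vertex is made non-adjacent to the vertices of $A$ (resp. $B$). I would check: every maximum independent set of $H$ has size $k+3$ and is $Z_j\cup W$ for a vertex $Z_j$ of $\P$ and a $k$-independent set $W$ of $G$ equal to $A$ if $Z_j$ is a type-$A$ toll booth, to $B$ if it is a type-$B$ toll booth, and arbitrary if $Z_j$ is free (this uses the consistency above, and Claim~\ref{clm:inter} for the gluing vertices, all of which are free). Consequently, in $\R_{k+3}(H)$ a move either slides a gadget token along $\P$ --- allowed only if $W$ is compatible with both endpoints --- or reconfigures $W$ inside $G$ while staying at a free $Z_j$; the component of $\R_{k+3}(H)$ containing $\P$ is connected, since at a free position one can dwell and perform a full $A\to B$ reconfiguration of $W$. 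For the diameter bound I take the two endpoints of $\P$ with arbitrary $G$-parts: in any reconfiguration sequence the gadget token travels from one end of $\P$ to the other, so it reaches the successive vertices of $\P$ for the first time in order; between the first visit of a type-$A$ toll booth and that of the next type-$B$ toll booth, $W$ must move from $A$ to $B$ at a cost of $\ge d$ steps, and these sub-sequences are pairwise disjoint, so the total length is $\ge \Omega(L)\cdot d = dn^2/e^{O(\sqrt{\log n})}$.

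The step I expect to be the main obstacle is designing the modified gadget so that everything holds at once: (a) $\R_3$ of the gadget is \emph{exactly} the union of the $|S|$ cycles --- no unwanted independent triple, no edge between cycles, independence number $3$ --- which requires checking that the parity argument behind Lemma~\ref{lem:manycomp} still forces $s=s'=s''$ after the extra $\mathbb{Z}/8$ coordinate correlates the admissible differences; (b) the global $A/B/\mathrm{free}$ colouring stays conflict-free through the vertex deletion and the Lemma~\ref{lem:joincomp} gluing, which is exactly why one deletes a vertex whose removal leaves free endpoints; and (c) the length of $\P$ and the number of alternations are both $\Omega(n^2/e^{O(\sqrt{\log n})})$, up to the unavoidable $e^{O(\sqrt{\log n})}$ loss coming from the size of $S$. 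Once Lemma~\ref{lem:LB_with_3IS} is established, Theorem~\ref{thm:secondbound} follows by starting from the complement of a path (or a single vertex, adding one or two disjoint triangles to correct the residue of $k$ modulo $3$) and iterating the lemma $\lfloor k/3\rfloor$ times, each iteration multiplying the reconfiguration diameter by $n^2/e^{O(\sqrt{\log n})}$.
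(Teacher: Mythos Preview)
Your overall strategy is the paper's: endow the gadget of Lemma~\ref{lem:manycomp} with a mod-$8$ clock so that toll booths pinned to $A$ (clock value $0$) and to $B$ (clock value $4$) alternate a linear number of times along each of the $|S|$ paths, combine with $G$, and glue via Lemma~\ref{lem:joincomp}. The paper does not introduce a separate $\mathbb{Z}/8$ coordinate; instead it takes $S=8S'+1$ so that every $s\in S$ is $1\pmod 8$, and then reads the clock directly off the vertex labels of the graph of Lemma~\ref{lem:manycomp}. The paper also reverses your order of operations: it first forms $G'=G\cup H$ (with the toll-booth edges), observes that $\R_{k+3}(G')$ inherits $|S|$ components each of diameter $\Omega(dn)$, and only then applies Lemma~\ref{lem:joincomp} to $G'$. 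Your variant---glue the gadget paths first, then attach $G$---is equivalent once the bookkeeping is done, and your explicit $\mathbb{Z}/m\times\mathbb{Z}/8$ product arguably sidesteps the wraparound issue that the paper's ``consecutive values modulo $8$'' claim glosses over.

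There is, however, one concrete error in your construction that makes it fail as written. You put \emph{all} edges between $G$ and the gadget and then delete only the edges from type-$A$ vertices to $A$ and from type-$B$ vertices to $B$. This leaves every \emph{free} gadget vertex complete to $V(G)$, so no $(k+3)$-independent set of $H$ can contain a free gadget vertex together with any vertex of $G$; in particular the sets $Z_j\cup W$ you analyse later do not exist, and $\R_{k+3}(H)$ is empty. The correct convention (the paper's) is the opposite: start with \emph{no} edges between the two parts, and add edges only from type-$A$ gadget vertices to $V(G)\setminus A$ and from type-$B$ gadget vertices to $V(G)\setminus B$. Your own downstream analysis (``$W$ arbitrary if $Z_j$ is free'', the toll pattern $A,\mathrm{f},B,B,B,\mathrm{f},A,A$) already presupposes this corrected version, so the slip is in the description rather than the idea; once fixed, the rest of your outline goes through and matches the paper's proof.
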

\begin{proof}
The construction is inspired from Section~\ref{sec:3BI}. Let $S'$ be a largest $3$-AP-free sequence in $[1,n/64]$ and $S$ be the set $8S'+1$. Recall that $|S|= \frac{n}{e^{O(\sqrt{\log n})}}$. Denote by $H$ the graph obtained applying Lemma~\ref{lem:manycomp} to $S$. Recall that vertices of $H$ can be labeled from $1$ to $n-1$ in such a way that:
\begin{enumerate}
    \item the independent sets of size 3 have consecutive values modulo 8, and
    \item if two such sets are adjacent in $\R_3(H)$, then their symmetric difference contains two elements whose difference is $\pm 3$ mod 8.
    \item each vertex of $H$ appears in at least one independent set in each connected component of $\R_3(H)$.
\end{enumerate}

Let $A$ and $B$ two independent sets of $G$ at distance $d$ from each other in $\R_k(G)$. Denote by $G'$ the graph obtained by taking a copy of $G$ and $H$ and adding all the edges:
\begin{itemize}
    \item between $V(G)\setminus A$ and vertices of $H$ that are 0 mod 8.
    \item between $V(G)\setminus B$ and vertices of $H$ that are 4 mod 8.
\end{itemize}

Note that since $G$ has no independent set of size more than $k$, then any independent set of $G'$ of size $k+3$ decomposes as $k$ vertices of $G$ and $3$ vertices of $H$. In particular, any reconfiguration sequence from $I$ to $J$ in $G'$ yields a reconfiguration sequence from $I\cap V(H)$ to $J\cap V(H)$ in $H$ (and the same holds for $G$). Therefore $\R_{k+3}(G')$ contains at least as many connected components as $\R_3(H)$. 

Let $X_1,\ldots, X_r$ be a connected component of $\R_3(H)$ which induces a path.  By construction, we may assume that $X_1$ contains vertices that are 1,2, and 3 mod 8, and by (1) and (2), each $X_i$ contains vertices that are $i,i+1$ and $i+2$ mod 8. Up to reducing $r$ by at most $7$, we may even assume that $X_r$ also contains vertices that are $1,2$ and $3$ mod $8$. Thus $X_1\cup A$ and $X_r\cup A$ are independent sets of $G'$.

Observe that if $i=1\mod 4$, there is no edge between $X_i$ and $V(G)$, hence there is a reconfiguration sequence of length $d$ between $X_i\cup A$ and $X_i\cup B$. Therefore, one can reach $X_r\cup A$ from $X_1\cup A$ going through the following steps: $X_1\cup B,X_2\cup B,X_3\cup B,X_4\cup B, X_5\cup B, X_5\cup A,\ldots$.

Therefore, $X_1\cup A$ and $X_r\cup A$ are in the same connected component of $\R_{k+3}(G')$. Let us now compute (a lower bound on) their distance. Consider a shortest reconfiguration sequence between $X_1\cup A$ and $X_r\cup A$. Recall that this yields a (non-necessarily shortest) reconfiguration sequence from $X_1$ to $X_r$ in $H$. By (3), for every vertex of $H$ which is $0$ mod $8$, we may choose a set $X_i$ that contains it, and denote by $X_{i_1},\ldots, X_{i_{n/8}}$ the subsequence they form (observe that this is well-defined since no $X_i$ can contain two vertices that are 0 mod 8 by (1)). 

Now by (1) and (2), note that in the reconfiguration sequence between every $X_{i_j}$ and $X_{i_{j+1}}$, there must exist some independent set $X_{i'_j}$ containing a vertex that is $4$ mod $8$. By construction, the only independent set of $G'$ containing each $X_{i_j}$ (resp. $X_{i'_j}$) is $X_{i_j}\cup A$ (resp. $X_{i'_j}\cup B$). Therefore the distance between $X_{i_j}\cup A$ and $X_{i'_j}\cup B$ is at least $d$, and so is the distance between $X_{i'_j}\cup B$ and $X_{i_{j+1}}\cup A$. 

Therefore, the distance between $X_1\cup A$ and $X_r\cup A$ is at least $2d\times (\frac{n}{8}-1)=\frac{dn}{4}-2d$. Since $\R_{k+3}(H)$ contains at least $|S|$ components of diameter at least $\frac{dn}{4}-2d$, applying Lemma~\ref{lem:joincomp} to $G'$ yields a graph on $|V(G')|+(3k-2)(|S|-1)= |V(G)|+n-1+(3k-2)(|S|-1)$ vertices with diameter at least $(4k-4)(|S|-1) + |S|dn/4-2d|S|$, which concludes since $|S|=n/e^{O(\sqrt{\log n})}$.

\end{proof}

As an immediate corollary, we obtain Theorem~\ref{thm:secondbound}.

\section{Conclusion}

Let us start the conclusion with this simple remark:

\begin{lemma}\label{lem:3IS-path}
Let $G$ be a graph. There exists a super-graph $G'$ of $G$ with the same number of vertices such that $\R_3(G')$ is a path and the largest diameters of the connected components of $\R_3(G)$ and $\R_3(G')$ are the same.
\end{lemma}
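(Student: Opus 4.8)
The plan is to take the graph $G$, look at its $3$-configuration graph $\R_3(G)$, and add edges to $G$ so that the configuration graph collapses into a single path without shrinking the largest component diameter. The first step is to understand the connected components $\C_1,\dots,\C_r$ of $\R_3(G)$ and their diameters $d_1,\dots,d_r$; we may assume $d_1 = \max_i d_i$. The point of the lemma is that adding edges to $G$ only removes vertices and edges from the configuration graph (an independent set of $G'$ is an independent set of $G$, and adjacency in $\R_3$ is an inclusion/exclusion condition that is only harder to satisfy when $G$ has more edges). So the strategy must be: delete enough $3$-independent sets by adding edges so that (i) each remaining component is a path, and (ii) one of these paths still has length $d_1$.

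The key step is to realize component $\C_1$'s diameter as an induced path. Fix a shortest path $Z_0, Z_1, \dots, Z_{d_1}$ in $\C_1$ between two independent sets at distance $d_1$. The vertices appearing in these sets span at most $3(d_1+1)$ vertices of $G$; call this vertex set $W$. The idea is to add to $G$ \emph{all} edges between pairs of vertices not both needed, i.e. to make every pair of vertices of $G$ adjacent \emph{except} those pairs that co-occur in some $Z_i$. Concretely, let $G'$ be the graph on $V(G)$ whose \emph{non-edges} are exactly the pairs $\{u,v\}$ such that $\{u,v\}\subseteq Z_i$ for some $i$. Then every $3$-independent set of $G'$ must be a triple all of whose pairs are non-edges, hence each pair lies in some $Z_i$; a short case analysis (using that consecutive $Z_i, Z_{i+1}$ share two vertices and the path is induced, so $Z_i\cap Z_j$ has size at most $1$ for $|i-j|\ge 2$) shows the only such triples are the $Z_i$ themselves, and that $Z_i$ is adjacent in $\R_3(G')$ only to $Z_{i-1}$ and $Z_{i+1}$. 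Hence $\R_3(G')$ is exactly the path $Z_0,\dots,Z_{d_1}$, which has diameter $d_1 = \max_i d_i$, and it is a super-graph of $G$ on the same vertex set.

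I expect the main obstacle to be the case analysis verifying that $\R_3(G')$ contains no spurious independent set and no spurious edge — in particular ruling out a triple built from three vertices lying on non-consecutive $Z_i$'s, and ruling out a ``shortcut'' edge between $Z_i$ and $Z_j$ with $|i-j|\ge 2$. Both follow from the fact that the chosen path is an \emph{induced} shortest path in $\R_3(G)$ (so any two of its vertices at configuration-distance $\ge 2$ share at most one vertex of $G$), but one must check the bookkeeping carefully, and handle the degenerate situation where $d_1$ is very small (e.g.\ $d_1 \le 1$), where the claim is trivial. One subtlety worth noting: if a vertex of $G$ is used with multiplicity (appears in several non-adjacent $Z_i$), the non-edge set is still well defined and the argument is unaffected, since we only ever reason about which pairs are non-edges of $G'$.
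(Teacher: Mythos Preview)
Your proposal is correct and essentially parallel to the paper's argument, though the packaging differs. The paper proceeds by an extremal argument: it keeps adding edges to $G$ as long as the maximum component diameter is preserved, and then shows that the resulting $\R_3(G)$ must be a path. You instead construct $G'$ explicitly by declaring its non-edges to be exactly the pairs occurring inside some $Z_i$ on a fixed diametral shortest path. Both routes reduce to the same case analysis: if $T=\{a,b,c\}$ has each of its pairs inside some $Z_i$, then $T$ is independent in $G$, its neighbours in $\P$ lie in a window of three consecutive $Z_i$'s (by the shortest-path property), and a short check forces $T$ to equal one of the $Z_i$. The paper phrases this last step as ``therefore $a=b=c$, so the three neighbours form a triangle in $\P$, contradiction''; your version will do the same bookkeeping. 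Your constructive formulation has the mild advantage of naming $G'$ outright; the paper's greedy formulation has the mild advantage that the case analysis hypothesis (``each pair of $T$ is inside some $Z_i$'') is forced by maximality rather than imposed by fiat.
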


\begin{proof}
We start by adding arbitrarily edges to $G$ while the largest diameter of a component in $\R_3(G)$ stays unchanged. To conclude, we show that $\R_3(G)$ is a path. Let $\P$ be a shortest path of maximal length in $\R_3(G)$.

Assume that there is a node $Z=\{u,v,w\}$ of $\R_3(G)$ that is not in $\P$. For each $x\neq y\in Z$, adding the edge $xy$ to $G$ decreases the diameter of the component of $\P$ in $\R_3(G)$. Hence there must be an independent set of $\P$ that contains both $x$ and $y$.

Therefore, we can assume that $\P$ contains three independent sets $\{u,v,a\},\{u,w,b\}$ and $\{v,w,c\}$, which are all pairwise distinct since $Z \notin \P$. Since $\P$ is a shortest path and these sets are all neighbors of $Z$, they must be consecutive in $\P$. Therefore, we have $a=b=c$. But then, $\{u,v,a\}, \{v,w,a\}$ and $\{u,w,a\}$ induces a triangle in $\R_3(G)$, a contradiction since $\P$ is induced.
\end{proof}

Note that all the graphs obtained from our constructions also satisfy that their configuration graphs are paths. We conjecture that the following is true in general:

\begin{conjecture}
For every $k \ge 2$ and every $n$, there exists a graph $G$ on $n$ vertices maximizing $D(n,k)$ and such that the $\mathcal{R}_k(G)$ is a path.
\end{conjecture}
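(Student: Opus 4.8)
The plan is to prove a strengthening that generalizes Lemma~\ref{lem:3IS-path} from $k=3$ to all $k\geq 2$: for every $n$-vertex graph $G$ there is a supergraph $G'$ of $G$ on the same vertex set such that $\R_k(G')$ is a path and the largest diameter of a component of $\R_k(G')$ is at least that of $\R_k(G)$. Applying this to a graph realizing $D(n,k)$ then produces a maximizer whose configuration graph is a path. (For $k=2$ this is immediate from Theorem~\ref{thm:2IS}, since $\R_2$ of the complement of a path is a path.) To build $G'$ I would saturate as in Lemma~\ref{lem:3IS-path}: repeatedly add to $G$ any edge whose addition does not decrease the maximum diameter of a connected component of $\R_k$, and stop when no such edge remains. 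The maximum component diameter only goes up during this process, so $G'$ is a maximizer; fix a shortest path $\P$ of maximum length in $\R_k(G')$, whose length is exactly that maximum. It then suffices to show that every $k$-independent set of $G'$ is a vertex of $\P$: since geodesics are induced, this forces $\R_k(G')=\P$.

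The key observation, already present in the proof of Lemma~\ref{lem:3IS-path}, is the following. Suppose some $k$-independent set $Z=\{z_1,\dots,z_k\}$ is not on $\P$. For any pair $\{x,y\}\subseteq Z$ the non-edge $xy$ was not added during saturation, so $G'+xy$ has strictly smaller maximum component diameter. But $\R_k(G'+xy)$ is just the subgraph of $\R_k(G')$ induced on the $k$-independent sets not containing both $x$ and $y$; hence if every vertex of $\P$ survived, $\P$ would still be a geodesic of the same length, a contradiction. Therefore some vertex $P_{(x,y)}$ of $\P$ contains the pair $\{x,y\}$, and this holds for all $\binom{k}{2}$ pairs inside $Z$. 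For $k=3$ one now wins as in Lemma~\ref{lem:3IS-path}: the three (distinct) sets $P_{(z_1,z_2)},P_{(z_1,z_3)},P_{(z_2,z_3)}$ are each adjacent to $Z$ in $\R_3$, so they occupy at most three consecutive positions on the geodesic $\P$; comparing them forces a common third vertex, producing a triangle among consecutive vertices of $\P$ and contradicting that $\P$ is induced.

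The step I expect to be the main obstacle is extending this to $k\geq 4$, where the conjecture is still open. The difficulty is that for $k\geq 4$ the forced sets $P_{(x,y)}$ need no longer be adjacent to $Z$: their symmetric difference with $Z$ can have size up to $k-1$, so they are not localized near one spot of $\P$ and the ``three consecutive vertices forming a triangle'' argument collapses. I see two plausible repairs. The first is an induction on $k$ via token-freezing: fix a $(k-3)$-subset $W\subseteq Z$ and pass to the link $G'_W$, the subgraph on vertices adjacent to none of $W$, whose $3$-configuration graph is exactly the induced subgraph of $\R_k(G')$ on the $k$-sets containing $W$; one would restrict $\P$ to a geodesic among such sets and rerun the $k=3$ analysis there, the crux being that the forced sets need not contain $W$ and that $G'_W$ need not itself be saturated, so extra work is needed to relocate the geodesic and recover enough rigidity. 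The second is a global count over the $\binom{k}{2}$ pairs: a single vertex of $\P$ meets $Z$ in at most $k-1$ vertices and so can serve as $P_{(x,y)}$ for at most $\binom{k-1}{2}$ pairs, forcing several distinct forced vertices on $\P$, after which one would try to combine their near-adjacencies to $Z$ into a short cycle or a degree-$\geq 3$ vertex of $\P$; this is clean for $k\in\{3,4\}$ but the slack in the count deteriorates badly as $k$ grows. Failing both, one can still prove that every component of $\R_k(G')$ is a path and reconnect via Lemma~\ref{lem:joincomp}, but this spends additional vertices and would only establish an asymptotic form of the conjecture.
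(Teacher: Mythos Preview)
The statement you are addressing is labeled a \emph{Conjecture} in the paper, and the paper does not prove it. Immediately after stating it, the authors only remark that it holds for $k=2$ (by Theorem~\ref{thm:2IS}) and for $k=3$ (by Lemma~\ref{lem:3IS-path}); the case $k\geq 4$ is explicitly left open. There is therefore no ``paper's own proof'' to compare your proposal against.

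For $k\leq 3$ your saturation argument is correct and coincides with the paper's proof of Lemma~\ref{lem:3IS-path} essentially verbatim. For $k\geq 4$ you yourself flag the gap, and it is genuine: once $k\geq 4$ the forced sets $P_{(x,y)}$ need not be adjacent to $Z$ in $\R_k$, so the ``three consecutive vertices on a geodesic form a triangle'' contradiction collapses exactly as you say. Neither of your two suggested repairs is known to close this. For the link/freezing approach, saturation of $G'$ does not transfer to the link $G'_W$, and a longest geodesic of $\R_k(G')$ need not restrict to a longest geodesic among $k$-sets containing $W$, so you cannot rerun the $k=3$ case there. For the counting approach, a single vertex of $\P$ can cover up to $\binom{k-1}{2}$ of the $\binom{k}{2}$ pairs, so the pigeonhole only guarantees $\lceil k/(k-2)\rceil$ distinct forced vertices --- just two for every $k\geq 4$ --- which is far too few to manufacture a short cycle on $\P$. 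Your fallback via Lemma~\ref{lem:joincomp} adds vertices and, as you note, gives at best an asymptotic statement rather than the conjecture itself.

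One small correction: the sentence ``the maximum component diameter only goes up during this process'' is not accurate. Adding an edge to $G$ deletes vertices of $\R_k(G)$, which can in principle raise or lower the maximum component diameter; your saturation rule only adds edges that keep it \emph{unchanged}. Your deduction that ``if no vertex of $\P$ contains $\{x,y\}$ then after adding $xy$ the diameter is still at least $d$'' is fine, but to contradict saturation you also need that it cannot exceed $d$. This is automatic precisely when you start from a graph realizing $D(n,k)$, which is the case you care about, so the overall strategy for $k\leq 3$ is unaffected.
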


Note that this result holds for $k=2$ (since complement of paths are tight) and for $k=3$ as proven in Lemma~\ref{lem:3IS-path}. 

Another interesting question is the following. We have remarked that both lower and upper bounds of the $(6,3)$-problem correspond to the bounds obtained for the largest possible diameter of $\R_3(G)$. Are the two problems equivalent (up to a multiplicative constant)? While the existence of a better diameter for some graph $G$ would immediately imply a better bound for the $(6,3)$-problem (by simply considering even vertices of a shortest path), the converse is not immediate.

\paragraph{Acknowledgments.}
This work started thanks to the \href{https://core-challenge.github.io/2022/}{CoRe programming challenge}~\cite{SohOI22} whose topic in 2022 consisted in finding the graphs on $10,50$ and $100$ vertices with the largest possible independent set reconfiguration diameter.

\bibliographystyle{abbrv}

\begin{thebibliography}{10}

\bibitem{behrend1946sets}
F.~A. Behrend.
\newblock On sets of integers which contain no three terms in arithmetical
  progression.
\newblock {\em Proceedings of the National Academy of Sciences},
  32(12):331--332, 1946.

\bibitem{DBLP:conf/wg/BonamyB17}
M.~Bonamy and N.~Bousquet.
\newblock Token sliding on chordal graphs.
\newblock In H.~L. Bodlaender and G.~J. Woeginger, editors, {\em
  Graph-Theoretic Concepts in Computer Science - 43rd International Workshop,
  {WG} 2017, Eindhoven, The Netherlands, June 21-23, 2017, Revised Selected
  Papers}, volume 10520 of {\em Lecture Notes in Computer Science}, pages
  127--139. Springer, 2017.

\bibitem{BousquetMNS22+}
N.~Bousquet, A.~E. Mouawad, N.~Nishimura, and S.~Siebertz.
\newblock A survey on the parameterized complexity of the independent set and
  (connected) dominating set reconfiguration problems.
\newblock {\em CoRR}, abs/2204.10526, 2022.

\bibitem{demaine_linear-time_2015}
E.~D. Demaine, M.~L. Demaine, E.~Fox-Epstein, D.~A. Hoang, T.~Ito, H.~Ono,
  Y.~Otachi, R.~Uehara, and T.~Yamada.
\newblock Linear-time algorithm for sliding tokens on trees.
\newblock {\em Theoretical Computer Science}, 600:132--142, Oct. 2015.

\bibitem{gowers2021generalizations}
W.~Gowers and B.~Janzer.
\newblock Generalizations of the ruzsa--szemer{\'e}di and rainbow tur{\'a}n
  problems for cliques.
\newblock {\em Combinatorics, Probability and Computing}, 30(4):591--608, 2021.

\bibitem{gowers2007hypergraph}
W.~T. Gowers.
\newblock Hypergraph regularity and the multidimensional szemer{\'e}di theorem.
\newblock {\em Annals of Mathematics}, pages 897--946, 2007.

\bibitem{DBLP:journals/tcs/HearnD05}
R.~A. Hearn and E.~D. Demaine.
\newblock {PSPACE}-completeness of sliding-block puzzles and other problems
  through the nondeterministic constraint logic model of computation.
\newblock {\em Theor. Comput. Sci.}, 343(1-2):72--96, 2005.

\bibitem{KMM12}
M.~Kami\'{n}ski, P.~Medvedev, and M.~Milani\v{c}.
\newblock Complexity of independent set reconfigurability problems.
\newblock {\em Theoretical Computer Science}, 439:9--15, 2012.

\bibitem{Klaus52}
R.~Klaus.
\newblock Sur quelques ensembles d’entiers.
\newblock {\em CR Acad. Sci. Paris Ser. I Math}, 234(388-390):19, 1952.

\bibitem{DBLP:journals/talg/LokshtanovM19}
D.~Lokshtanov and A.~E. Mouawad.
\newblock The complexity of independent set reconfiguration on bipartite
  graphs.
\newblock {\em {ACM} Trans. Algorithms}, 15(1):7:1--7:19, 2019.

\bibitem{DBLP:journals/algorithms/Nishimura18}
N.~Nishimura.
\newblock Introduction to reconfiguration.
\newblock {\em Algorithms}, 11(4):52, 2018.

\bibitem{rodl2005hypergraph}
V.~R{\"o}dl, B.~Nagle, J.~Skokan, M.~Schacht, and Y.~Kohayakawa.
\newblock The hypergraph regularity method and its applications.
\newblock {\em Proceedings of the National Academy of Sciences},
  102(23):8109--8113, 2005.

\bibitem{ruzsa1978triple}
I.~Z. Ruzsa and E.~Szemer{\'e}di.
\newblock Triple systems with no six points carrying three triangles.
\newblock {\em Combinatorics (Keszthely, 1976), Coll. Math. Soc. J. Bolyai},
  18:939--945, 1978.

\bibitem{SohOI22}
T.~Soh, Y.~Okamoto, and T.~Ito.
\newblock Core challenge 2022: Solver and graph descriptions.
\newblock {\em CoRR}, abs/2208.02495, 2022.

\bibitem{H13}
J.~van~den Heuvel.
\newblock The complexity of change.
\newblock {\em Surveys in Combinatorics 2013}, 409:127--160, 2013.

\bibitem{WROCHNA14}
M.~Wrochna.
\newblock Reconfiguration in bounded bandwidth and treedepth.
\newblock {\em CoRR}, 2014.
\newblock arXiv:1405.0847.

\bibitem{DBLP:journals/toc/Zuckerman07}
D.~Zuckerman.
\newblock Linear degree extractors and the inapproximability of max clique and
  chromatic number.
\newblock {\em Theory of Computing}, 3(1):103--128, 2007.

\end{thebibliography}

\end{document}